\newtheorem{alphtheorem}{Theorem}
\newtheorem{alphlemma}{Lemma}
\newtheorem{theorem}{Theorem}
\newtheorem{problem}{Problem}[theorem]
\newtheorem{question}{Question}[theorem]
\newtheorem{lemma}[theorem]{Lemma}
\newtheorem{fact}[theorem]{Fact}
\newtheorem{claim}[theorem]{Claim}
\newtheorem{proposition}[theorem]{Proposition}
\newtheorem{conjecture}{Conjecture}
\DeclareMathOperator\G{\mathcal{G}}
\DeclareMathOperator\F{\mathcal{F}}
\title{Borodin-Kostochka conjecture and Partitioning a graph into classes with no clique of specified size}
\author{Yaser Rowshan$^1$}
\keywords{ Vertex partitionable, Clique number, Degenerate graphs, Borodin-Kostochka  conjecture.}
\subjclass[2010]{05C69, 05C35, 05C15.}
\address{$^1$Y. Rowshan, 
	Department of Mathematics, Institute for Advanced Studies in Basic Sciences (IASBS), Zanjan 45137-66731, Iran}
\email{y.rowshan@iasbs.ac.ir,~y.rowshan.math@gmail.com}
\begin{document}
	\maketitle 
	\begin{abstract}
For a given graph $H$ and the  graphical properties  $P_1, P_2,\ldots,P_k$, a graph $H$	is said to be $(V_1, V_2,\ldots,V_k)$-partitionable if there exists a partition of $V(H)$ into $k$-sets $V_1, V_2\ldots,V_k$,  such that for each $i\in[k]$, the subgraph induced by  $V_i$ has the property $P_i$.  In $1979$, Bollob\'{a}s and Manvel  showed that for a graph $H$  with  maximum degree $\Delta(H)\geq 3$ and clique number $\omega(H)\leq \Delta(H)$, if $\Delta(H)= p+q$, then there exists a $(V_1,V_2)$-partition of $V(H)$, such that $\Delta(H[V_1])\leq p$, $\Delta(H[V_2])\leq q$, $H[V_1]$ is $(p-1)$-degenerate, and $H[V_2]$ is $(q-1)$-degenerate. 

Assume  that  $p_1\geq p_2\geq\cdots\geq p_k\geq 2$  are  $k$ positive integers and  $\sum_{i=1}^k p_i=\Delta(H)-1+k$. Assume that for each $i\in[k]$ the properties $P_i$  means that $\omega(H[V_i])\leq p_i-1$. Is $H$  a $(V_1,\ldots,V_k)$-partitionable graph?

  In 1977, Borodin and Kostochka  conjectured that any graph $H$ with maximum degree $\Delta(H)\geq 9$ and without $K_{\Delta(H)}$ as a subgraph, has chromatic number at most $\Delta(H)-1$. 	Reed proved that  the conjecture holds whenever $ \Delta(G) \geq 10^{14} $.	 
	When  $p_1=2$ and $\Delta(H)\geq 9$, the above question is the Borodin and Kostochka conjecture. Therefore, when  all $p_i$s are equal to $2$ and $\Delta(H)\leq 8$, the answer to the above question is negative. Let $H$ is a  graph with maximum degree $\Delta$, and clique number $\omega(H)$, where $\omega(H)\leq \Delta-1$.	In this  article, we intend to study this question when $k\geq 2$ and $\Delta\geq 13$. In particular  as an analogue of the Borodin-Kostochka conjecture, for the case that $\Delta\geq 13$ and $p_i\geq 2$ we prove that  the above question is true.
	\end{abstract}
	
	\section{Introduction}   
	All graphs considered in this article are undirected, simple, and finite. For a given  graph  $H=(V(H),E(H))$, the degree and neighbors of $v\in V(H)$ are denoted by $\deg_H{(v)}$ ($\deg{(v)}$) and $N_H(v)$($N(v)$), respectively. The  maximum degree of $H$ is denoted by  $\Delta(H)$ and the minimum degree of $H$ is denoted by $\delta(H)$. 	Suppose that $V'$ and $V''$ be two disjoint subsets of $V(H)$, we use $E[V',V'']$ to denote the set of edges between the bipartition $(V', V'' )$ of $H$. Suppose that $W$ is any subset of $V(H)$, the induced subgraph $H[W]$ is the graph whose vertex set is $W$ and whose edge set consists of all of the edges in $E(H)$ that have both endpoints in $W$.  The clique number $\omega(H)$ of a graph $H$ is the largest integer $\omega$, so that $H$ contains a complete subgraph of size $\omega$. 
	For a given graph $H$ and the  graphical properties  $P_1, P_2,\ldots,P_k$, a graph $H$	is said to be $(V_1, V_2,\ldots,V_k)$-partitionable if there exists a partition of $V(H)$ into $k$-sets $V_1, V_2\ldots,V_k$,  such that for each $i\in[k]$, the subgraph induced by  $V_i$ has the property $P_i$.  For more results on  $(V_1,\ldots, V_k)$-partitionable see e.g. \cite{simoes1976joins,bickle2021maximal,  bickle2021wiener, bickle2012structural, von2022point, lick1970k}.  	A graph $H$ is $k$-degenerate if every subgraph of $H$ contains a vertex of degree at most $k$. For the case that $k=2$, Bollob\'{a}s and Manvel  in  \cite{Bollob} have shown  the following result regarding  $(V_1,V_2)$-partition.
	\begin{alphlemma}\label{lem1}{\rm\cite{Bollob}}
		Suppose that $H$ is a graph with  maximum degree $\Delta(H)\geq 3$ and clique number $\omega(H)\leq \Delta(H)$.
		If $\Delta(H)= p+q$, then there exists a $(V_1,V_2)$-partition of $V(H)$, such  that $\Delta(H[V_1])\leq p$, $\Delta(H[V_2])\leq q$, $H[V_1]$ is $(p-1)$-degenerate, and $H[V_2]$ is $(q-1)$-degenerate.  
	\end{alphlemma}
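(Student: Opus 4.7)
The strategy is a classical extremal argument on partitions. For a bipartition $(V_1, V_2)$ of $V(H)$, set
\[
W(V_1,V_2) \;=\; q\,\bigl|E(H[V_1])\bigr| \;+\; p\,\bigl|E(H[V_2])\bigr|,
\]
and fix $(V_1,V_2)$ minimizing $W$ over all bipartitions of $V(H)$. I will verify each of the four desired properties on such an optimum.

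I would first establish $\Delta(H[V_1])\leq p$ and $\Delta(H[V_2])\leq q$ by single-vertex moves. Suppose $v\in V_1$ has $d_1:=\deg_{H[V_1]}(v)\geq p+1$; writing $d_2$ for the number of neighbors of $v$ in $V_2$, the bound $d_1+d_2\leq \Delta=p+q$ forces $d_2\leq q-1$, so moving $v$ from $V_1$ to $V_2$ alters $W$ by $-qd_1+pd_2\leq -q(p+1)+p(q-1)=-(p+q)<0$, contradicting minimality. A symmetric argument yields $\Delta(H[V_2])\leq q$.

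For the degeneracy, assume toward contradiction that $H[V_1]$ is not $(p-1)$-degenerate. Then some nonempty $S\subseteq V_1$ has $\delta(H[S])\geq p$, and together with the just-established $\Delta(H[V_1])\leq p$ this forces $H[S]$ to be $p$-regular with every $v\in S$ having all of its $H[V_1]$-neighbors inside $S$. For any $v\in S$, moving $v$ to $V_2$ alters $W$ by $-qp+p\,d_2(v)$ with $d_2(v)\leq q$; minimality then gives $d_2(v)=q$ for every $v\in S$, so each such vertex is saturated, $\deg_H(v)=\Delta$.

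The main obstacle is to convert this very rigid configuration into a contradiction, and this is where $\omega(H)\leq\Delta$ enters crucially. The plan is to use two-vertex swaps: exchange some $v\in S$ with a carefully chosen $u\in N(v)\cap V_2$ and trace the cascade of further swaps and moves induced by the exchange (the new $V_1$ may still contain a $p$-regular subgraph, and the new $V_2$ may momentarily exceed the degree cap, demanding further corrections). Iterating until the process terminates should either produce a partition of strictly smaller $W$, contradicting minimality, or exhibit a $K_{\Delta+1}$ in $H$ assembled from $S$ together with a common set of $q$ neighbors in $V_2$, contradicting $\omega(H)\leq\Delta$. Controlling this cascade cleanly — in particular handling the boundary cases $p=1$ or $q=1$, which is the reason the hypothesis $\Delta(H)\geq 3$ is imposed — is the step that demands the most care and forms the technical heart of the argument.
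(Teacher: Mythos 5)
Your setup is the right one and the first two stages are correct: minimizing $W=q\,|E(H[V_1])|+p\,|E(H[V_2])|$ does give $\Delta(H[V_1])\le p$ and $\Delta(H[V_2])\le q$ via the single-vertex move computation $-q(p+1)+p(q-1)=-(p+q)<0$, and a failure of $(p-1)$-degeneracy does force a set $S\subseteq V_1$ inducing a $p$-regular union of components of $H[V_1]$ in which every $v\in S$ has exactly $q$ neighbours in $V_2$ and $\deg_H(v)=\Delta$. (For the record, the paper itself does not prove this lemma; it is imported verbatim from Bollob\'as--Manvel, so there is no in-paper argument to compare against.)

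The gap is that everything after that point is a plan rather than a proof, and it is precisely the part where the hypothesis $\omega(H)\le\Delta(H)$ must do the work. From the rigid configuration one cannot simply read off a $K_{\Delta+1}$: $S$ being $p$-regular does not make $S$ a clique, and the $q$ neighbours in $V_2$ of distinct vertices of $S$ need not coincide, so ``exhibit a $K_{\Delta+1}$ assembled from $S$ together with a common set of $q$ neighbours'' is an unjustified leap. Likewise ``iterate swaps until the process terminates'' is not an argument: you do not specify which $u\in N(v)\cap V_2$ to exchange, what quantity strictly decreases (or which secondary objective among the $W$-minimizers is being optimized), why the cascade terminates, or why termination yields one of your two alternatives. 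The known proofs handle exactly this step by imposing a tie-breaking second optimization among $W$-minimizers (for instance, minimizing the number of vertices lying in regular components of the two sides, or a Mozhan-type shuffling of a single vertex) and then analyzing how components merge after a swap; none of that machinery appears here. As written, the degeneracy half of the lemma --- the only half that actually needs $\omega(H)\le\Delta(H)$ and $\Delta(H)\ge 3$ --- remains unproved.
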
	
	As a generalization of Brooks' Theorem, Catlin showed that every graph $H$ with $\Delta(H)\geq 3$
	without $K_{\Delta(H)+1}$ as a subgraph, has a $\Delta(H)$-coloring such that one of  the color classes is 
	a maximum independent set~\cite{Catlin}. The author and Taherkhani,	in \cite{rowshan2022catlin}, have shown the following theorem.
	\begin{alphtheorem}\label{2th}\cite{rowshan2022catlin}
		Let $d_1,d_2\ldots,d_k$ be $k$  positive integers. Assume that $G_1,G_2\ldots,G_k$
		are  connected graphs with minimum degrees  $d_1,d_2\ldots,d_k$, respectively, and $H$ is a connected graph with maximum degree
		$\Delta(H)$ where $\Delta(H)=\sum_{i=1}^{k}d_k$. Assume that $G_1,G_2,\ldots,G_k$,
		and $H$ satisfy the following conditions:
		\begin{itemize}
			\item If $k=1$, then $H$ is not isomorphic to  $G_1$.
			\item If  $G_i$ is isomorphic to $K_{d_i+1}$ for each $1\leq i\leq k$, then $H$ is not isomorphic to $K_{\Delta(H)+1}$.
			\item If  $G_i$ is isomorphic to $K_{2}$ for each $1\leq i\leq k$, then $H$ is neither an odd cycle nor a complete graph.
		\end{itemize}
		Then, there is a partition of vertices of $H$ to $V_1,V_2\ldots,V_k$
		such that each $G_i\nsubseteq H[V_i]$ and moreover one of  $V_i$s  can be chosen in a way that $H[V_i]$  be a maximum induced  $G_i$-free subgraph in $H$(i.e $H$ contain  no copy of $G$). 
	\end{alphtheorem}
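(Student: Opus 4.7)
The plan is to proceed by induction on $k$, at each step extracting a maximum induced $G_1$-free subset and then applying the inductive hypothesis to the remainder.

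For the base case $k=1$, take $V_1 = V(H)$. If $H$ contained a copy of $G_1$ as a subgraph, then every vertex $v$ of that copy would satisfy $\deg_H(v) \geq \delta(G_1) = d_1 = \Delta(H)$, forcing all $H$-neighbours of $v$ to lie inside the copy. Connectivity of $H$ and $G_1$ together with this degree-saturation would then yield $H \cong G_1$, contradicting the hypothesis.

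For the inductive step $k \geq 2$, let $V_1 \subseteq V(H)$ be a set of maximum cardinality such that $H[V_1]$ contains no copy of $G_1$. The core observation is that
\[
\Delta\bigl(H[V(H)\setminus V_1]\bigr) \;\leq\; \sum_{i=2}^k d_i.
\]
Indeed, for any $v \in V(H)\setminus V_1$, maximality of $V_1$ forces $H[V_1 \cup \{v\}]$ to contain a copy of $G_1$; since $V_1$ itself is $G_1$-free, that copy must use $v$, and inside it $v$ has at least $\delta(G_1)=d_1$ neighbours, all of which lie in $V_1$. Hence $v$ has at most $\Delta(H)-d_1 = \sum_{i=2}^k d_i$ neighbours in $V(H)\setminus V_1$. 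One then applies the inductive hypothesis to each connected component of $H[V(H)\setminus V_1]$ with the tuple $(G_2,\ldots,G_k)$ and $(d_2,\ldots,d_k)$, and $V_1$ itself plays the role of the ``maximum induced $G_1$-free'' class demanded in the conclusion.

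The main obstacle is verifying the three exceptional hypotheses on each connected component $C$ of $H[V(H)\setminus V_1]$. Components with $\Delta(C) < \sum_{i\geq 2} d_i$ can be handled by a straightforward reduction to a lower-$k$ instance (or by absorbing $C$ wholesale into a single $V_j$ with $\delta(G_j) > \Delta(C)$). The genuinely delicate case is when $\Delta(C) = \sum_{i\geq 2} d_i$ and $C$ is isomorphic to one of the forbidden graphs: $G_2$ (when $k=2$), $K_{\sum_{i\geq 2} d_i+1}$ (when $G_2,\ldots,G_k$ are all cliques), or an odd cycle or complete graph (when $G_2=\cdots=G_k=K_2$). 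The planned strategy in these cases is a Brooks-style local exchange: because $H$ is connected there is an edge $uv$ with $u\in V_1$ and $v\in C$, and a careful choice of such a pair should allow one to replace $V_1$ by $V_1\cup\{v\}\setminus\{u\}$ while retaining both maximality and $G_1$-freeness, thereby destroying the exceptional structure of $C$. The verification that a legal exchange exists, together with the argument that iterating it terminates—leveraging the global hypothesis that $H$ itself is not the forbidden graph to close the last step—is where the bulk of the casework and the main technical difficulty will lie.
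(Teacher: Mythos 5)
First, a point of order: the paper does not actually prove Theorem~\ref{2th}; it is imported verbatim from \cite{rowshan2022catlin} and used as a black box, so there is no in-paper proof to measure your attempt against. Judged on its own terms, your outline correctly handles the easy half. The base case is fine (a copy of $G_1$ in $H$ forces every vertex of the copy to have degree exactly $d_1=\Delta(H)$, so by connectivity the copy is spanning and edge-saturated, giving $H\cong G_1$), and the key degree count is right: if $V_1$ is a maximum $G_1$-free set, every $v\notin V_1$ lies in a copy of $G_1$ inside $H[V_1\cup\{v\}]$, hence has at least $\delta(G_1)=d_1$ neighbours in $V_1$ and at most $\sum_{i\geq 2}d_i$ neighbours outside. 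This is the standard skeleton for Catlin-type refinements of Brooks' theorem.

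The genuine gap is that the entire content of the theorem lives in the step you defer. When a component $C$ of $H\setminus V_1$ is exceptional (isomorphic to $G_2$ when $k=2$, to $K_{1+\sum_{i\geq2}d_i}$ when the remaining $G_i$ are cliques, or an odd cycle/complete graph in the $K_2$ case), your proposed fix is to replace $V_1$ by $V_1\cup\{v\}\setminus\{u\}$ for a suitable edge $uv$ with $u\in V_1$, $v\in C$. But $G_1$-freeness of the new set requires a single vertex $u\neq v$ lying on \emph{every} copy of $G_1$ in $H[V_1\cup\{v\}]$, and no such $u$ need exist: the copies through $v$ can pairwise intersect only in $v$. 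Even when a legal swap exists, you must also show that it does not create a new exceptional component containing $u$ on the other side, and that the process terminates rather than cycling; your only lever for the final step is the global hypothesis on $H$, and you give no mechanism connecting it to the local picture. This is precisely why the proofs of such results (Catlin, Borodin--Kostochka, Bollob\'as--Manvel, and \cite{rowshan2022catlin}) do not run a naive one-vertex exchange but instead minimize a global potential over all admissible partitions and derive structural constraints on a minimizer. As written, the proposal is an announcement of the difficulty rather than a proof of the theorem.
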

In this article, for a graph $H$ and $k$ positive integers $p_1\geq p_2\geq\cdots\geq p_k\geq 2$, instead to $(V_1,\ldots, V_k)$-partitionable,	we say $H$ is $(K_{p_1},\ldots,K_{p_k})$-partitionable if there exists  a partition of $V(H)$ into $k$-sets, $V_1, V_2\ldots,V_k$, such that for each $i\leq k $ we have  $\omega(H[V_i])\leq p_i-1$.	
		\begin{question}~\label{genBK}
		Suppose that $H$ is a graph with $\Delta(H)\geq 6$ and clique number $\omega(H)$, where $\omega(H)\leq \Delta(H)-1$. 
		Assume  that  $p_1\geq p_2\geq\cdots\geq p_k\geq 2$  are  $k$ positive integers 
		and  $\sum_{i=1}^k p_i=\Delta(H)-1+k$. Is $H$    a $(K_{p_1},\ldots,K_{p_k})$-partitionable graph?
	\end{question}
In $1977$, Borodin and Kostochka conjectured that Brooks’ bound can be further improved if $\Delta(H)\geq 9$.
\begin{conjecture}\label{co1}\cite{borodin1977upper}
For a graph $H$, if  $\Delta(H)\geq 9$, then $\chi(H)\leq \max\{\omega(H),\Delta(H)-1\}$.
\end{conjecture}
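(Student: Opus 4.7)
The plan is to derive Conjecture~\ref{co1} by reducing it to the special case of Question~\ref{genBK} in which every part size equals $2$. Suppose $H$ satisfies $\omega(H)\leq\Delta(H)-1$; set $k=\Delta(H)-1$ and $p_1=\cdots=p_k=2$. Then $\sum_{i=1}^k p_i=2k=\Delta(H)-1+k$, so the arithmetic hypothesis of Question~\ref{genBK} holds. A $(K_2,\ldots,K_2)$-partition of $V(H)$ is a partition into $\Delta(H)-1$ classes $V_i$ with $\omega(H[V_i])\leq 1$, i.e., $\Delta(H)-1$ independent sets. Thus an affirmative answer to Question~\ref{genBK} in this instance is exactly a proper $(\Delta(H)-1)$-coloring of $H$, giving $\chi(H)\leq \Delta(H)-1\leq \max\{\omega(H),\Delta(H)-1\}$. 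The complementary case $\omega(H)\geq \Delta(H)$ is handled by Brooks' theorem, since $K_{\Delta(H)+1}\not\subseteq H$ then forces $\chi(H)\leq \Delta(H)=\max\{\omega(H),\Delta(H)-1\}$.

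In the regime $\Delta(H)\geq 13$, I would invoke the main theorem announced in the abstract of the present paper, which resolves Question~\ref{genBK} for all admissible $p_i\geq 2$, and specialize it with $p_1=\cdots=p_{\Delta(H)-1}=2$. For the remaining finite range $9\leq \Delta(H)\leq 12$ I would work with a vertex-minimum counterexample $H$ and exploit the Kostochka--Stiebitz structural theory for graphs with $\chi=\Delta$ and $\omega<\Delta$, which forces $H$ to be $\Delta$-regular and severely restricts its local picture: since $\omega(H)<\Delta$, the neighborhood $H[N(v)]$ of every vertex $v$ must contain a non-edge $\{u,w\}$. A Mozhan-type Kempe-chain argument along such non-adjacent pairs aims to free a colour at $v$. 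I would couple this with the Bollob\'{a}s--Manvel partition of Lemma~\ref{lem1}, splitting $V(H)$ into $V_1,V_2$ with $\Delta(H[V_i])\leq p_i$ and $H[V_i]$ being $(p_i-1)$-degenerate, and then Brooks-colour each side with $p_i$ colours and merge, taking special care along the interface $E[V_1,V_2]$.

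The main obstacle is precisely the low-$\Delta$ window $\{9,10,11,12\}$: here the partitioning technology that drives the $\Delta\geq 13$ theorem of the paper loses its slack, because with $\Delta=9$ and all $p_i=2$ one has $k=8$ classes and only one unit of budget $\sum p_i-\Delta+1=1$, leaving essentially no room to repair a failed local swap. Reed's probabilistic approach also collapses, as the concentration inequalities in his proof require $\Delta$ to be astronomically large. Consequently, closing the conjecture in this window will likely require a structural classification: pin down the finite list of possible $\Delta$-critical obstructions with $\omega<\Delta$ and $\Delta\leq 12$, bound the order of a minimum counterexample, and verify the colouring bound on each surviving case, possibly with computer assistance. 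I expect this to be the dominant difficulty, and the overall success of the plan rests on a refined analysis of the local structure around maximum cliques in such critical graphs.
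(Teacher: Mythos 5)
There is a fundamental mismatch here: the statement you are trying to prove is the Borodin--Kostochka conjecture itself, which the paper states as an \emph{open conjecture} and does not prove. The paper only records Reed's partial result ($\Delta(H)\geq 10^{14}$) and the equivalence, for $p_1=\cdots=p_k=2$ and $\Delta(H)\geq 9$, between Question~\ref{genBK} and Conjecture~\ref{co1}. Your first paragraph simply re-derives that equivalence (a partition into $\Delta(H)-1$ parts with $\omega(H[V_i])\leq 1$ is a proper $(\Delta(H)-1)$-colouring), so it restates the conjecture rather than making progress on it.

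The concrete error is in your second paragraph: you propose to invoke the paper's main theorem (Theorem~\ref{thm1}) with $p_1=\cdots=p_{\Delta(H)-1}=2$, but that theorem carries the hypothesis $p_1+p_2\geq 14$, which is violated as soon as $p_1=p_2=2$. The paper's partition results are an \emph{analogue} of Borodin--Kostochka that deliberately excludes the all-$2$'s instance; if Theorem~\ref{thm1} applied there, the paper would have established the conjecture for all $\Delta\geq 13$, vastly improving Reed's bound, which it does not claim to do. (Indeed, the internal machinery --- Facts~\ref{f1}--\ref{f3} --- requires $q\geq 5$, $6$, or $7$, and the low-$q$ cases are handled by independent-set extraction via Theorems~\ref{th5} and \ref{th6}, none of which reaches $q=2$ for every part.) The remainder of your plan for $9\leq\Delta\leq 12$ --- Mozhan-type Kempe chains, a classification of critical obstructions, possible computer search --- is an honest research programme, but it contains no completed argument, and you yourself flag the dominant difficulty as unresolved. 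As written, the proposal does not constitute a proof of the statement, and no proof exists in the paper to compare it against.
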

By Brooks'theorem, each graph $H$ with $\chi(H) > \Delta(H)\geq 9$  contains $K_{\Delta(H)+1}$. So the
Borodin-Kostochka conjecture is equivalent to the statement that each $H$ with $\chi(H)= \Delta(H)\geq 9$  contains $K_{\Delta(H)}$. In $1999$, Reed proved that  the Conjecture \ref{co1} holds whenever $ \Delta(G) \geq 10^{14} $ \cite{reed1999strengthening}. Note that  the above question for $k=1$ is clearly true.  Also,	note that  the above question in case that  $p_1=2$ (i.e. all $p_i$s are equal to $2$) and $\Delta(H)\geq 9$, is the Borodin and Kostochka conjecture. Therefore, when  all $p_i$s are equal to $2$ and $\Delta(H)\leq 8$, the answer to the above question is negative. Let $\G$ be a  class of graphs such that the Borodin-Kostochka conjecture is true for each $H\in \G$. Therefore for each $H\in\G$ we have $\chi(H) \leq \Delta-1$. Hence  by $\sum_{i=1}^k (p_i-1)=\Delta(H)-1$  it can be said that for each $H\in \G$,  $V(H)$ can be decomposed into $k$-sets $V_1, V_2\ldots,V_k$, such that for each $i\leq k $, we have  $\chi(H[V_i])\leq p_i-1$, that is   $\omega(H[V_i])\leq p_i-1$ for each $i\leq k $.  Therefore, if  the Borodin and Kostochka conjecture holds and in Question~\ref{genBK}  we have $p_1\geq 3$ and $\Delta(H)\geq 9$, then the answer of Question~\ref{genBK} is 
positive.

So assume that $H$ is a  counter-example to the Borodin-Kostochka conjecture of maximum degree $\Delta$, and clique number $\omega(H)$, where $\omega(H)\leq \Delta-1$. When $\chi(H)=\Delta(H)$, there are  series of interesting and useful results  that show $\omega(H)$
	must be close to $\Delta(H).$ As the first result, Borodin and Kostochka showed that if $\chi(H) =\Delta(H) \geq 7$, then $H$ contains $K_{\frac{\Delta(H)+1}{2}}$ \cite{BORODIN1977247}.  Mozhan proved that $\omega(H)\geq \Delta(H)-3$, when $\Delta(H)\geq 31$\cite{mozhan}. Finally, Cranston and Rabern strengthened the Mozhan's result   by weakening the condition to $\Delta(H)\geq 13$ \cite{cranston2015graphs}. 
	
	  In this  article, we shall show the following theorems are valid.
	\begin{theorem}~\label{thm1}
		Assume  that $\Delta$, $p_1\geq p_2\geq\cdots\geq p_k\geq 2$  are  $k+1$ positive integers where $p_1+p_2\geq 14$ and  $\sum_{i=1}^k p_i=\Delta-1+k$. If $H$ is a  countere-example for the Borodin-Kostochka conjecture with $\Delta(H)=\Delta\geq 13$, then $H$ is  a $(K_{p_1},\ldots,K_{p_k})$-partitionable.
	\end{theorem}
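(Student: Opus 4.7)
The plan is to prove Theorem~\ref{thm1} by induction on $k$, with the substantive content lying in the base case $k = 2$; once that case is secured, the inductive step becomes iterated peeling combined with the regrouping trick from the introduction. Throughout the argument, we exploit both the hypothesis $\omega(H)\le\Delta-1$ from $H$ being a Borodin--Kostochka counter-example and, when necessary, the Cranston--Rabern bound $\omega(H)\ge\Delta-3$.

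For the base case $k = 2$, we have $p_1+p_2=\Delta+1$ with $p_1\ge p_2\ge 2$ and $p_1+p_2\ge 14$. The first step is to invoke Lemma~\ref{lem1} with parameters $p:=p_1-1$ and $q:=p_2$, which are both at least $1$ and satisfy $p+q=\Delta=\Delta(H)$; the hypothesis $\omega(H)\le\Delta-1\le\Delta$ of Lemma~\ref{lem1} is also met. This produces a partition $(V_1,V_2)$ with $H[V_1]$ being $(p_1-2)$-degenerate and $H[V_2]$ being $(p_2-1)$-degenerate, together with $\Delta(H[V_1])\le p_1-1$ and $\Delta(H[V_2])\le p_2$. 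Since an $r$-degenerate graph has clique number at most $r+1$, the bound $\omega(H[V_1])\le p_1-1$ is automatic; what remains is to sharpen $\omega(H[V_2])\le p_2$ by one unit.

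For this sharpening, I would fix a partition produced by Lemma~\ref{lem1} that lexicographically minimizes the pair consisting of the number of copies of $K_{p_2}$ in $H[V_2]$ followed by $|E(H[V_2])|$. Assuming for contradiction that a copy $K\cong K_{p_2}$ still sits inside $H[V_2]$, each $v\in V(K)$ satisfies $p_2-1\le\deg_{H[V_2]}(v)\le p_2$ (so $v$ has at most one neighbor in $V_2\setminus V(K)$) and $\deg_{V_1}(v)\le\Delta-(p_2-1)=p_1$. The goal is to produce some $v\in V(K)$ whose reassignment from $V_2$ to $V_1$ contradicts minimality: neither a fresh $K_{p_1}$ arises inside $V_1\cup\{v\}$, nor do additional $K_{p_2}$-copies appear in the updated $V_2$. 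The counter-example hypothesis $\omega(H)\le\Delta-1=p_1+p_2-2$ is the key driver, because if every $v\in V(K)$ gave rise to a $K_{p_1}\subseteq V_1\cup\{v\}$ upon moving, then joining such a $K_{p_1}$ with the surviving $K-v\cong K_{p_2-1}$ would force $\omega(H)\ge p_1+p_2-1=\Delta$, a contradiction. The slack $p_1+p_2\ge 14$ paired with $\Delta\ge 13$ and, if needed, the Cranston--Rabern inequality $\omega(H)\ge\Delta-3$ for case-splitting on $\omega(H)\in\{\Delta-1,\Delta-2,\Delta-3\}$, should make the exchange go through.

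For the inductive step $k\ge 3$, I would apply Lemma~\ref{lem1} with $p:=p_1-1$ and $q:=\Delta-p_1+1=\sum_{i=2}^{k}(p_i-1)+1$ to peel off $V_1$ with $\omega(H[V_1])\le p_1-1$, and then consider $H':=H-V_1$, which satisfies $\Delta(H')\le q$ and $\sum_{i=2}^{k}p_i=\Delta(H')-1+(k-1)$ (plus $\omega(H')\le\Delta-1$). If $H'$ is itself a Borodin--Kostochka counter-example with $\Delta(H')\ge 13$ and $p_2+p_3\ge 14$, the inductive hypothesis applies; otherwise, as observed in the introduction, $H'$ admits a proper $(\Delta(H')-1)$-coloring whose classes can be regrouped into $k-1$ blocks of sizes $p_2-1,\ldots,p_k-1$, yielding the required partition directly, with the corner case $\Delta(H')<13$ handled by Brooks' theorem. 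The main obstacle throughout is the exchange step in the base case: the slack between $\omega(H)\le\Delta-1$ and $\Delta$ is a single unit, so producing the correct $v\in V(K)$ requires careful bookkeeping of each clique-vertex's neighborhood split across $V_1$ and $V_2$, and is where the quantitative hypothesis $p_1+p_2\ge 14$ is really used.
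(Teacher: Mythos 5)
Your reduction of the problem to the case $k=2$ and the subsequent peeling/padding induction is essentially the same skeleton the paper uses, but your base case contains a genuine gap at exactly the point where all the difficulty of the theorem lives. After applying Lemma~\ref{lem1} you correctly observe that one side is off by one: you get $\omega(H[V_2])\le p_2$ and must improve this to $p_2-1$. Your proposed ``key driver'' for the exchange is the claim that if moving each $v\in V(K)$ into $V_1$ created a copy of $K_{p_1}$ in $H[V_1\cup\{v\}]$, then that copy together with $K-v$ would yield a clique of size $p_1+p_2-1=\Delta$. This is false: the only vertex guaranteed to be adjacent to both $V(K_{p_1})\setminus\{v\}$ and $V(K)\setminus\{v\}$ is $v$ itself, so the union need not be a clique and you only recover $\omega(H)\ge\max(p_1,p_2)$, which you already knew. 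Without that contradiction the exchange has no engine, and the further requirements (that the move not create new $K_{p_2}$'s in $V_2$, and that the lexicographic minimality survive the destruction of the degeneracy orderings supplied by Lemma~\ref{lem1}) are left entirely to ``careful bookkeeping'' that is never performed.

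The paper's proof is evidence that this step cannot be patched by a one-vertex swap. Its argument for $k=2$ (Theorem~\ref{t1}) proceeds quite differently: it takes a minimal counter-example, removes a \emph{maximum clique} $K_{\omega}$ (with $\omega\in\{\Delta-1,\Delta-2,\Delta-3\}$ by Cranston--Rabern), first establishes structural facts such as the disjointness of the $\Delta-1$ cliques (Theorem~\ref{th1}), then redistributes the clique vertices over the two parts by minimizing a cut quantity over a family $\F$ of balanced splits, and runs several rounds of exchanges inside the clique (Facts~\ref{f1}--\ref{f3}). Crucially, for the small values $q\in\{2,3,4,5,6\}$ --- including $p_2=2$, where your target $\omega(H[V_2])\le 1$ asks for an independent set hitting every maximum clique --- the paper abandons exchanges altogether and invokes the independent-transversal theorems of Rabern and King (Theorems~\ref{th5} and~\ref{th6}). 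Your proposal would need to reproduce something of comparable strength for those cases; as written it does not, so the base case, and hence the proof, is incomplete.
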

	\begin{theorem}~\label{t2}
		Suppose that $H$ is a graph with $\Delta(H)\geq 13$, $\chi(H)=\Delta(H)$, and clique number $\omega(H)\leq \Delta(H)-1 $. Also, suppose that  $p_1\geq p_2\geq\cdots\geq p_k\geq 2,$  are  $k$ positive integers where   $\sum_{i=1}^k p_i=\Delta(H)-1+k$. Let $V'$ is a subset of $V(H)$, such that $H[V']\cong K_{\omega(H)}$. If  $H'=H\setminus V'$ is  a $(K_{p_1},\ldots,K_{p_k})$-partitionable and moreover one of  $V_i$s  can be chosen in a way that $H'[V'_i]$  be a maximum induced   $K_{p_1}$-free subgraph in $H'$,  $H$ is  a $(K_{p_1},\ldots,K_{p_k})$-partitionable and moreover one of  $V_i$s  can be chosen in a way that $H[V_i]$  be a maximum induced   $K_{p_1}$-free subgraph in $H$.
	\end{theorem}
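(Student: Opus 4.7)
The plan is to extend the given partition $V(H') = V'_1 \cup \cdots \cup V'_k$ to a partition $V(H) = V_1 \cup \cdots \cup V_k$ by distributing $V'$ among the parts, i.e., setting $V_i = V'_i \cup W_i$ for some partition $(W_1,\ldots,W_k)$ of $V'$ which must be carefully constructed. Three numerical ingredients will drive the argument. First, $\sum_i(p_i-1) = \Delta(H) - 1 \geq \omega(H)$, providing enough aggregate capacity. Second, because $V'$ is a clique of size $\omega(H)$ in $H$, every $x\in V'$ has at most $\Delta(H) - \omega(H) + 1$ neighbors in $V(H')$. Third, the Cranston--Rabern bound $\omega(H) \geq \Delta(H) - 3$ combined with this yields $|N(x) \cap V(H')| \leq 4$ for every $x \in V'$.

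I would place the vertices of $V'$ into the parts one at a time, greedily. Assume $t$ vertices have been placed yielding a partial partition $(W^{(t)}_1,\ldots,W^{(t)}_k)$ with each $V'_i \cup W^{(t)}_i$ still $K_{p_i}$-free. For the next vertex $x \in V' \setminus \bigcup_i W^{(t)}_i$, if $x$ is blocked from every part, then each obstruction gives a clique $T_i \subseteq V'_i \cap N(x) \cap \bigcap_{w \in S_i} N(w)$ of size $p_i - 1 - |S_i|$ for some $S_i \subseteq W^{(t)}_i$; in particular $|T_i| \geq p_i - 1 - |W^{(t)}_i|$, and $T_i \subseteq N(x)$. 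Summing yields at least $\sum_i (p_i - 1 - |W^{(t)}_i|) = \Delta(H) - 1 - t$ neighbors of $x$ in $V(H')$, which combined with $|N(x)\cap V(H')|\le \Delta(H) - \omega(H) + 1$ forces $t \geq \omega(H) - 2$. Hence the first $\omega(H) - 2$ vertices of $V'$ can always be placed greedily.

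For the last one or two placements, where the greedy bound becomes tight, I would invoke a local swap. In the extremal blocked configuration, $x$ is fully adjacent to $V'$, has exactly $\Delta(H) - \omega(H) + 1 \leq 4$ external neighbors, and its external neighborhood is rigidly partitioned into the obstruction cliques for each part. I would exchange $x$ with some previously placed $w \in W^{(t)}_i$: since $w$ and $x$ share identical adjacencies inside the clique $V'$, replacing $w$ by $x$ preserves $K_{p_i}$-freeness of $V_i$, and then $w$ can be re-placed in a different part because removing $x$ from the pool and releasing $w$ dissolves one of the obstructions. The smallness $|N(x) \cap V(H')| \leq 4$ together with $\omega(H) \in \{\Delta(H)-3, \Delta(H)-2, \Delta(H)-1\}$ should make such a swap feasible after a careful case analysis.

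Finally, I would ensure that $V_1$ is a maximum induced $K_{p_1}$-free subgraph of $H$. Because $V'_1$ is maximum $K_{p_1}$-free in $H'$, any induced $K_{p_1}$-free $U \subseteq V(H)$ satisfies $|U \cap V(H')| \leq |V'_1|$ and $|U \cap V'| \leq p_1 - 1$ (as $V'$ is a clique), hence $|U| \leq |V'_1| + p_1 - 1$. Thus any valid $V_1 = V'_1 \cup W_1$ with $|W_1| = p_1 - 1$ is automatically maximum in $H$. I would therefore arrange the placement procedure to prioritize filling $W_1$ to size $p_1 - 1$. The principal obstacle is the swap argument for the final one or two placements, which requires a careful exploitation of the Cranston--Rabern structural bound and the very small external neighborhoods of the vertices in $V'$.
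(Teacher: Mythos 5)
Your route is genuinely different from the paper's: you extend the given partition of $H'$ directly across the clique $V'$ by greedy placement, driven by the count $\sum_i(p_i-1)=\Delta(H)-1$ together with the bound $|N(x)\cap V(H')|\le \Delta(H)-\omega(H)+1\le 4$ from Cranston--Rabern, whereas the paper reduces to $k=2$, deletes the clique, pads $H\setminus V'$ up to maximum degree $\Delta(H)$, re-attaches the clique by invoking the extension machinery behind Theorem~\ref{t1} (Facts~\ref{f1}--\ref{f3}), and finishes with the same maximality count you use ($|S\cap V(K)|\le p-1$). Your counting step is correct and cleanly shows that a blocked vertex can only occur among the last two placements. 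The problem is that everything after that point is asserted rather than proved, and that is exactly where the difficulty of the theorem lives.

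Two concrete gaps. First, the swap. You justify it by saying that since $w$ and $x$ ``share identical adjacencies inside the clique $V'$, replacing $w$ by $x$ preserves $K_{p_i}$-freeness of $V_i$.'' That reason is wrong: the obstructions are created by the \emph{external} neighborhoods $N(\cdot)\cap V'_i$, on which $x$ and $w$ need not agree, so $V'_i\cup(W_i\setminus\{w\})\cup\{x\}$ can in principle contain a $K_{p_i}$ through $x$'s external neighbors even though $V'_i\cup W_i$ did not. (In the tight blocked configuration at step $t=\omega(H)-2$ one can in fact check that $N(x)\cap V'_i$ equals the obstruction set $T_i$ of size $p_i-1-|W_i|$, so the swap into part $i$ is safe there --- but this is a computation you did not perform, the non-tight case $t=\omega(H)-1$ behaves differently, and you never show that the released vertex $w$ can be re-placed in another part.) This endgame is precisely what the paper's Facts~\ref{f1}--\ref{f3} spend several pages on, with repeated re-partitions of the clique guided by Proposition~\ref{l1}; it cannot be dispatched by ``a careful case analysis should make such a swap feasible.'' Second, the maximality conclusion requires that the final partition actually have $|W_1|=p_1-1$ (and implicitly that $\omega(H)\ge p_1-1$; when $\omega(H)<p_1-1$, which is possible since $p_1$ can be as large as $\Delta(H)-1$ while $\omega(H)=\Delta(H)-3$, your upper bound $|U|\le |V'_1|+p_1-1$ is not attained by any $V'_1\cup W_1$ and the target must be $|V'_1|+\omega(H)$). ``Prioritizing'' part $1$ in the greedy order does not by itself guarantee that $p_1-1$ clique vertices land there while every other part stays clique-free; this needs an argument, and it is entangled with the unproved endgame.
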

	As a direct consequence of Theorem~\ref{thm1}, and  Theorem~\ref{t2}, one can see that if $k=2$, $p_1=p_2=p\geq 7$, $ \Delta\geq 13$, and  $H$ be a graph with $\omega(H)\leq \Delta(H)-1$, then $\chi_{_{K_p}}(H)\leq \lceil{\Delta(H)-1\over p-1}\rceil=2< \lceil{\Delta(H)\over p-1}\rceil=3$, which is better than the result of Theorem  \ref{2th}. Another proof of  theorem \ref{thm1} can be seen in  \cite{yas}.
	\section{2-vertex partition of a graph  into two induced subgraphs not containing prescribed cliqes}	
	In this section, we prove the main results for the case that $k=2$.	  In particular we prove the following theorem.
	\begin{theorem}~\label{t1}
		Suppose that $H$ be a graph with $\Delta(H)\geq 13$ and clique number $\omega(H)$, where  $\omega(H)\leq \Delta(H)-1$. 
		Assume  that  $p$ and $q$ are  two positive integers where $p+q=\Delta(H)+1$. Then $H$ is a  $(K_{p_1},K_{p_2})$-partitionable.
	\end{theorem}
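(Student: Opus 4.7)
The plan is a case split on $\chi(H)$. If $\chi(H)\le\Delta(H)-1$, fix a proper $(\Delta-1)$-coloring of $H$ with color classes $C_1,\ldots,C_{\Delta-1}$ and take $V_1=C_1\cup\cdots\cup C_{p-1}$ and $V_2=C_p\cup\cdots\cup C_{\Delta-1}$. Then $H[V_1]$ is properly $(p-1)$-colorable and $H[V_2]$ is properly $(q-1)$-colorable (since $(\Delta-1)-(p-1)=q-1$), so $\omega(H[V_1])\le p-1$ and $\omega(H[V_2])\le q-1$ as required. From now on we may assume $\chi(H)=\Delta(H)\ge 13$, which together with $\omega(H)\le\Delta-1$ means that $H$ is a counterexample to the Borodin--Kostochka conjecture; this is the substantive case.

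For this main case I would begin by applying Lemma~\ref{lem1} with parameters $(p-1,q)$, since $(p-1)+q=\Delta(H)$, to obtain a partition $(V_1,V_2)$ with $\Delta(H[V_1])\le p-1$, $\Delta(H[V_2])\le q$, $H[V_1]$ being $(p-2)$-degenerate, and $H[V_2]$ being $(q-1)$-degenerate. The $(p-2)$-degeneracy immediately yields $\omega(H[V_1])\le p-1$, so only the bound $\omega(H[V_2])\le q-1$ remains to be established. The strategy is to choose, among all partitions meeting the Bollobás--Manvel conclusions, one that minimises a suitable invariant (for instance $|E(H[V_1])|$, or a weighted sum that penalises cliques of the forbidden sizes on both sides), and then to argue by contradiction that no $K_q$ survives in $H[V_2]$. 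Given a hypothetical clique $C\cong K_q$ inside $V_2$, swapping a vertex $c\in C$ to $V_1$ destroys $C$; the job is to find $c$ whose move does not create a $K_p$ in $V_1$, or else to chain several such swaps to reach a valid partition.

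The main obstacle is certifying that the swap does not merely trade a $K_q$ in $V_2$ for a $K_p$ in $V_1$. Here the counterexample hypothesis contributes two essential ingredients: the Cranston--Rabern theorem, which forces $\omega(H)\ge\Delta-3$ and rigidly pins down the neighborhoods of the large cliques of $H$; and the assumption $\omega(H)\le\Delta-1$, which rules out the very cliques one would need for the obstruction to persist. Combining these with the $(p-2)$-degeneracy of $H[V_1]$ and the arithmetic slack $p+q=\Delta+1\ge 14$ should let an extremal argument show that if a swap created a $K_p$ in $V_1$, then merging that $K_p$ with the clique $C$ along their common neighborhood would already produce a $K_\Delta$ in $H$, contradicting $\omega(H)\le\Delta-1$. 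This is effectively the $k=2$ specialisation of Theorem~\ref{thm1}; once it is in place, Theorem~\ref{t1} follows by combining it with the coloring argument of the first case.
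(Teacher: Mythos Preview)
Your first case ($\chi(H)\le\Delta-1$) is correct and matches the paper. The gap is in the main case $\chi(H)=\Delta$. Your central claim---that ``if a swap created a $K_p$ in $V_1$, then merging that $K_p$ with the clique $C$ along their common neighborhood would already produce a $K_\Delta$''---is not justified and is in general false. The new $K_p$ (on $\{c\}\cup V_1$) and the old $K_q=C$ (in $V_2$) share only the single vertex $c$; for their union to be a clique, each of the $p-1$ vertices of the $K_p$ lying in $V_1$ would have to be adjacent to each of the $q-1$ vertices of $C\setminus\{c\}$. Degree bounds allow this but do not force it, so no contradiction with $\omega(H)\le\Delta-1$ follows. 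The phrase ``chain several such swaps'' is not an argument either: nothing prevents every vertex of $C$ from completing a $K_p$ on the other side, and a single extremal invariant of the kind you suggest does not by itself rule this out.

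The paper proceeds quite differently and never appeals to Lemma~\ref{lem1} in this case. It deletes a maximum clique $K$ (of size $\Delta-1$, $\Delta-2$, or $\Delta-3$ by Theorem~\ref{th2}), partitions $H\setminus K$ by minimality, then splits $V(K)$ into two pieces of sizes about $p-1$ and $q-1$, and swaps vertices \emph{inside} $V(K)$---where adjacency is complete, so the effect of each swap on the cliques straddling $K$ is fully controlled. The swaps are governed by minimising the number of edges between each side of $K$ and the corresponding side of $H\setminus K$; Claims~\ref{c13}--\ref{c19} extract the structural consequences of that minimisation (Facts~\ref{f1}, \ref{fa1}, \ref{f2}, \ref{f3}). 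This only works when both $p$ and $q$ are moderately large (at least $5$, $6$, $7$ respectively as $\omega(H)=\Delta-1,\Delta-2,\Delta-3$); for small $q$ the paper abandons swapping entirely and instead peels off independent sets hitting all maximum cliques via Theorems~\ref{th5} and~\ref{th6}, reducing $\omega$ below $p$. Your proposal has no analogue of either mechanism.
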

	
	With the assumptions of Theorem  \ref{t1}, if $\chi(H)\leq \Delta-1$, then it can be said that  there exists a $(V_1,V_2)$-decomposition of $V(H)$, so that $\chi(H[V_1])\leq p-1$ and $\chi(H[V_2])\leq q-1$. Hence, in this case the correctness of Theorem \ref{t1}, is obvious. Hence, we may suppose that $\chi(H)=\Delta$. That is assume that $H$ is a  counter-example to the Borodin-Kostochka conjecture. By the following results of  Cranston and  Rabern, we can suppose that  $\omega(H)\in \{ \Delta-3, \Delta-2, \Delta-1\}$. 
	
	\begin{theorem}\label{th2}\cite{cranston2015graphs}
		If $H$ is a  graph with $ \chi(H)\geq\Delta(H)\geq 13$, then $\omega(H)\geq\Delta(H)-3$.
	\end{theorem}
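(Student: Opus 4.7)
The plan is to argue by contradiction: suppose $H$ is a vertex-minimum graph with $\chi(H)\geq\Delta(H)=\Delta\geq 13$ and $\omega(H)\leq\Delta-4$. Since $\omega(H)<\Delta+1$, Brooks' theorem forces $\chi(H)=\Delta$, so $H$ is $\Delta$-chromatic-critical; in particular $\delta(H)\geq\Delta-1$, and every edge-contraction or vertex-deletion yields a $(\Delta-1)$-colorable graph.

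The main tool I would invoke is a \emph{Mozhan-style optimal partition}. Choose positive integers $p_1,p_2,p_3\geq 3$ with $p_1+p_2+p_3=\Delta+2$ (so $\sum(p_i-1)=\Delta-1<\chi(H)$, ruling out a simultaneous $\chi(H[V_i])\leq p_i-1$ partition). Among all partitions $V(H)=V_1\cup V_2\cup V_3$, choose one that lexicographically minimizes first $\sum_i |E(H[V_i])|$ and then a secondary parameter counting the number of ``offending'' cliques. Standard local exchange at a single vertex $v\in V_i$ shows $\deg_{H[V_i]}(v)\leq p_i-1$ for all $v$; combined with $\deg_H(v)\geq\Delta-1=\sum_i(p_i-1)-1$, each $v$ has a tightly controlled distribution of neighbors across the three blocks. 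A further refinement (moving a vertex $v$ from $V_i$ to $V_j$ and tracking the change in a weighted objective) forces each $H[V_i]$ to contain a copy of $K_{p_i}$, say $Q_i\subseteq V_i$: otherwise one could locally absorb a vertex from a neighboring block and strictly improve the partition.

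The heart of the argument is then a Kempe-chain rigidity step applied to the three cliques $Q_1,Q_2,Q_3$. For each pair $(i,j)$, consider the bipartite structure of potential $(V_i,V_j)$-swaps around $Q_i\cup Q_j$: because $H[V_i\cup V_j]$ can be refined into a near-proper coloring with $p_i+p_j-1$ colors, a Kempe-chain swap that moves a vertex of $Q_j$ into $V_i$ must create a new $K_{p_i}$ in $V_i$ overlapping $Q_i$, or else improve the partition. Iterating this over all three pairs and tracking the overlap pattern, one deduces that $Q_1\cup Q_2\cup Q_3$ contains a clique of size at least $p_1+p_2+p_3-5=\Delta-3$ (after accounting for up to two ``shared'' vertices per overlap), contradicting $\omega(H)\leq\Delta-4$.

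The main obstacle is the Kempe-chain rigidity bookkeeping: one has to tune the parameters $(p_1,p_2,p_3)$ (for instance near $\lceil\Delta/3\rceil$) so that each swap has enough slack to either improve the partition or enforce a large overlap between two of the $Q_i$'s. The hypothesis $\Delta\geq 13$ enters precisely here, guaranteeing that the inequality $p_i+p_j-2\geq 2\lceil\Delta/3\rceil-1$ leaves enough room to close every branch of the case analysis; for smaller $\Delta$ the slack evaporates, which is consistent with Mozhan's original $\Delta\geq 31$ threshold being improvable only down to $13$ by this technique.
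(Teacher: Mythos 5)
The paper does not actually prove this statement: it is quoted verbatim from Cranston and Rabern \cite{cranston2015graphs} and used purely as a black box, so there is no in-paper proof to compare yours against. Judged on its own merits, your proposal correctly identifies the family of techniques behind the real result --- reduction to a vertex-critical counterexample with $\delta(H)\geq\Delta-1$, a Mozhan-type optimal partition with $\sum p_i=\Delta+2$, local exchanges giving $\deg_{H[V_i]}(v)\leq p_i-1$, and the forced appearance of a $K_{p_i}$ in each part --- and those preliminary reductions are essentially sound (modulo stating the right weighted objective, e.g.\ minimizing $\sum_i\bigl(|E(H[V_i])|-(p_i-1)|V_i|\bigr)$ rather than $\sum_i|E(H[V_i])|$).

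However, the heart of the theorem is missing. All of the content lives in the step you call ``Kempe-chain rigidity'': the claim that the swap analysis forces $Q_1\cup Q_2\cup Q_3$ to contain a clique of size $p_1+p_2+p_3-5=\Delta-3$. As written this is numerology rather than an argument: you give no mechanism by which a clique sitting inside $V_i$ becomes completely joined to a clique sitting inside $V_j$, no precise definition of the secondary parameter whose optimality you invoke, and no analysis of the case where a swap creates a new $K_{p_i}$ only partially overlapping $Q_i$ --- which is exactly where Mozhan's and Cranston--Rabern's proofs spend the bulk of their effort, tracking the orbit of a single ``active'' vertex per part and proving the relevant cliques are pairwise almost completely joined. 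Your closing paragraph concedes the gap: ``tuning $(p_1,p_2,p_3)$ so that every branch of the case analysis closes'' is precisely the unproved work, and the improvement from Mozhan's threshold $\Delta\geq 31$ down to $\Delta\geq 13$ required genuinely new structural ideas in \cite{cranston2015graphs}, not merely extra arithmetic slack near $\lceil\Delta/3\rceil$. So the proposal is a reasonable roadmap to the literature, but it is not a proof.
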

	To prove  Theorem \ref{t1}, we need  the following results.
	\begin{proposition}\label{l1}
		Let $H$ be a graph, where $\Delta(H)=\Delta$ and $\omega(H)=\omega$. Also, assume that $K_{\omega}$ is a clique of size $\omega$ in $H$. Hence, for each $v,v'\in V(H)\setminus V(K_{\omega})$, such that $vv'\in E(H)$, there exists two members $w,w'(w\neq w')$ of $V(K_{\omega})$, such that $vw,v'w'\notin E(H)$. 
	\end{proposition}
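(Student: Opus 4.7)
The plan is to proceed by contradiction, leveraging the fact that $K_\omega$ is a \emph{maximum} clique (since $\omega(H)=\omega$), so no vertex outside $K_\omega$ can be joined to all of it and no two adjacent outside vertices can share too many clique-neighbors.

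Concretely, for the edge $vv'\in E(H)$ with $v,v'\notin V(K_\omega)$, I would introduce the ``non-neighbor sets''
\[
A=V(K_\omega)\setminus N(v),\qquad A'=V(K_\omega)\setminus N(v').
\]
The conclusion of the proposition is equivalent to the statement that one can pick $w\in A$ and $w'\in A'$ with $w\neq w'$. First I would verify $A\neq\emptyset$ and $A'\neq\emptyset$: if, say, $A=\emptyset$, then $v$ is adjacent to every vertex of $K_\omega$, and $\{v\}\cup V(K_\omega)$ would be a clique of size $\omega+1$, contradicting $\omega(H)=\omega$. The same argument handles $A'$.

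Next I would split on sizes. If $|A|\ge 2$, then for any choice of $w'\in A'$ at least one element of $A$ differs from $w'$, so a valid pair exists; the case $|A'|\ge 2$ is symmetric. The only remaining situation is $|A|=|A'|=1$, say $A=\{w_0\}$ and $A'=\{w_0'\}$. If $w_0\neq w_0'$ we are done immediately by taking $(w,w')=(w_0,w_0')$.

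The main (and essentially only) obstacle is therefore to rule out the equality case $w_0=w_0'$. Here I would consider the set
\[
S=\{v,v'\}\cup\bigl(V(K_\omega)\setminus\{w_0\}\bigr),
\]
which has size $\omega+1$. Inside $S$: the set $V(K_\omega)\setminus\{w_0\}$ is a clique of size $\omega-1$; by assumption $vv'\in E(H)$; and since $A=A'=\{w_0\}$, both $v$ and $v'$ are adjacent to every vertex of $V(K_\omega)\setminus\{w_0\}$. Hence $S$ induces a clique of size $\omega+1$, contradicting $\omega(H)=\omega$. This contradiction closes the last case and completes the proof.
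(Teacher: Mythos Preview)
Your proof is correct and follows exactly the same idea as the paper's own proof: assume the conclusion fails and exhibit a clique of size $\omega+1$. The paper compresses this into a single sentence (``otherwise one can find a clique of size at least $\omega+1$''), whereas you have simply spelled out the case analysis on the non-neighbor sets $A,A'$ explicitly; the substance is identical.
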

	\begin{proof}
		In otherwise, one can find a clique of size at least $\omega+1$ in $H$, which is a contradiction.	
	\end{proof}	
	The next result was proved by Reed in \cite{reed1999strengthening}.  
	\begin{lemma}\label{l0}\cite{reed1999strengthening}
		If $H$ is a minimal counter-example to the Borodin-Kostochka conjecture of maximum degree $\Delta$, then its $\Delta-1$ cliques are disjoint.
	\end{lemma}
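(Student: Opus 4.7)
Proof plan. Argue by contradiction: assume two distinct $(\Delta-1)$-cliques $C_1,C_2$ of $H$ share at least one vertex. Set $t=|C_1\cap C_2|$ and $W=C_1\cup C_2$. The plan is to construct a proper $(\Delta-1)$-coloring of $H$, contradicting $\chi(H)=\Delta$. The opening observation is that every $v\in C_1\cap C_2$ is adjacent to all of $W\setminus\{v\}$, so $|W|-1\leq\deg(v)\leq\Delta$; together with $|W|=2(\Delta-1)-t$, this forces $t\geq\Delta-3$. Because $C_1\neq C_2$, only the subcases $t=\Delta-2$ and $t=\Delta-3$ remain.

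Case $t=\Delta-2$. Let $\{a\}=C_1\setminus C_2$ and $\{b\}=C_2\setminus C_1$. The clique bound $\omega(H)\leq\Delta-1$ forces $ab\notin E(H)$, otherwise $C_1\cup\{b\}$ would be a $K_\Delta$. By minimality, $H-a$ admits a proper $(\Delta-1)$-coloring $c$; since $C_2$ is a $K_{\Delta-1}$, $c$ uses all $\Delta-1$ colors on $C_2$, so the colors on $C_1\cap C_2$ are exactly $\{1,\dots,\Delta-1\}\setminus\{c(b)\}$. The $\Delta-2$ neighbors of $a$ inside $C_1\cap C_2$ thus forbid every color except $c(b)$, and $a$ has at most two further neighbors, none equal to $b$. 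If $c(b)$ is free at $a$, extend $c$ to $a$---contradiction. Otherwise some outside neighbor $x$ of $a$ already uses color $c(b)$, and we perform a Kempe swap on colors $\{c(b),\beta\}$ in the bichromatic component $K$ containing $x$, for a suitably chosen $\beta$. Since every $v\in C_1\cap C_2$ is adjacent to $b$, any such $v$ of color $\beta$ lying in $K$ would force $b\in K$; hence a choice of $\beta$ with $b\notin K$ preserves the colors on $C_1\cap C_2$, recolors $x$ to $\beta$, and frees $c(b)$ for $a$.

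Case $t=\Delta-3$. Write $C_1\setminus C_2=\{a_1,a_2\}$ and $C_2\setminus C_1=\{b_1,b_2\}$. Every $v\in C_1\cap C_2$ is adjacent to each of the other $\Delta$ vertices of $W$, hence has no neighbor outside $W$. The set $(C_1\cap C_2)\cup\{a_1,a_2,b_j\}$ has $\Delta$ vertices with all pairs adjacent except possibly $a_1b_j$ or $a_2b_j$, so $\omega(H)\leq\Delta-1$ forces at least one of these to be a non-edge; thus indices $i,j$ exist with $a_ib_j\notin E(H)$. Now replay the previous case with $(a,b):=(a_i,b_j)$: color $H-a_i$ with $\Delta-1$ colors, note that $C_2$ still pins the palette, and extend to $a_i$ via the same Kempe swap.

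The main obstacle is the Kempe-swap step: one must guarantee a color $\beta$ whose $\{c(b),\beta\}$-bichromatic component containing $x$ avoids $b$. Were every admissible $\beta$ blocked, $x$ would be Kempe-linked to $b$ in every color, forcing $x$ to have a neighbor of each color $\beta\neq c(b)$ on the initial edge of some such chain. Combined with the edge $xa$ and the constraint $\deg(x)\leq\Delta$, following these chains back toward $b$ yields either a $K_\Delta$ in $H$ or a degree violation at some vertex near $b$. Executing this accounting precisely in both subcases is the single genuinely technical ingredient of the argument; the rest is structural bookkeeping driven by the identity $|C_1\cup C_2|=2(\Delta-1)-t$ and the clique bound.
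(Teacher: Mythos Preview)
The paper does not supply a proof of Lemma~\ref{l0}; it is quoted as a result of Reed and cited to~\cite{reed1999strengthening}. What the paper does prove is the analogue Theorem~\ref{th1}, in which ``minimal counter-example to Borodin--Kostochka'' is replaced by ``minimal counter-example to Theorem~\ref{t1}'', and that argument uses no colourings and no Kempe chains: in each of the cases $t=\Delta-2$ and $t=\Delta-3$ one deletes a small set of vertices ($\{x,y\}$, respectively $\{x,y,x',y'\}$), invokes minimality to obtain a $(K_p,K_q)$-partition of the rest, and then argues directly that the deleted vertices can be reinserted into the two sides.

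Your opening structural analysis---forcing $t\in\{\Delta-2,\Delta-3\}$, the non-edge $ab$, no triangle in $\{a_1,a_2,b_1,b_2\}$---is correct and matches the paper's Claims~\ref{c11} and~\ref{c12}. But the core of your plan, the Kempe-chain extension, is a genuine gap rather than routine bookkeeping. You assert that if every admissible $\beta$ leaves $x$ Kempe-linked to $b$, then ``following these chains back toward $b$ yields either a $K_\Delta$ in $H$ or a degree violation at some vertex near $b$''. I do not see how to extract either conclusion: the $\{c(b),\beta\}$-components may be long and branching, for different $\beta$ the chain can enter $b$ through the single vertex $v_\beta\in C_1\cap C_2$ of colour $\beta$ (whose only other possible $c(b)$-neighbour is its one outside neighbour), and the degree bound at $b$ or at $x$ does not obviously forbid this for all $\beta$ simultaneously. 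Until this step is actually carried out, the proposal is a heuristic, not a proof. Separately, your reduction of the case $t=\Delta-3$ to the previous one is too quick: after deleting $a_i$ and colouring $H-a_i$, the unique colour missing from $C_1\setminus\{a_i\}$ is $c(b_{j'})$, where $j'$ is determined a posteriori by $c(a_{3-i})\in\{c(b_1),c(b_2)\}$, not by the non-edge $a_ib_j$ you fixed in advance; if $a_ib_{j'}\in E(H)$ the candidate colour is already blocked before any Kempe swap. This is repairable (one can show the other choice of $i$ then succeeds), but it is not simply ``replay the previous case''.
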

	Now, we will use the idea used to prove Lemma \ref{l0}, and prove the following theorem.
	\begin{theorem}\label{th1}
		If $H$ is a minimal counter-example to Theorem \ref{t1} for the case that $\Delta(H)=\Delta$,  $\omega(H)=\Delta-1$ and $p\geq q\geq 5$, then its $\Delta-1$ cliques are disjoint.
	\end{theorem}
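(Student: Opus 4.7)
The plan is to mimic the argument Reed used to prove Lemma \ref{l0}. Assume, towards a contradiction, that $H$ is a minimal counter-example to Theorem \ref{t1} with $\omega(H)=\Delta-1$ and $p\geq q\geq 5$, and that two $(\Delta-1)$-cliques $K_1$ and $K_2$ of $H$ share at least one vertex.

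First I will pin down the intersection. For each $v\in V(K_1)\cap V(K_2)$ we have $\deg_H(v)\geq |V(K_1)\cup V(K_2)|-1=2(\Delta-1)-|V(K_1)\cap V(K_2)|-1$, and the bound $\deg_H(v)\leq\Delta$ forces $|V(K_1)\cap V(K_2)|\geq\Delta-3$. Together with $\omega(H)=\Delta-1$ (so $V(K_1)\cup V(K_2)$ cannot induce a $\Delta$-clique), this leaves exactly the two cases $|V(K_1)\cap V(K_2)|\in\{\Delta-3,\Delta-2\}$. Put $S=V(K_1)\cap V(K_2)$: a recount shows that in the case $|S|=\Delta-3$ every vertex of $S$ is saturated (no neighbor outside $V(K_1)\cup V(K_2)$), and in the case $|S|=\Delta-2$ each vertex of $S$ has at most one such outside neighbor; moreover every vertex of $(V(K_1)\cup V(K_2))\setminus S$ has at most two outside neighbors. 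This rigid local structure is the source of the contradiction.

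Next I pick a single vertex $a\in V(K_1)\setminus V(K_2)$ and work with $H^{*}=H\setminus\{a\}$. We have $\omega(H^{*})\leq\Delta-1$ and $\Delta(H^{*})\leq\Delta$. By minimality of $H$ when $\Delta(H^{*})=\Delta$, or by grouping color classes of a $(\Delta-1)$-coloring of $H^{*}$ coming from Brooks' theorem when $\Delta(H^{*})\leq\Delta-1$, we obtain a $(K_{p},K_{q})$-partition $(V_{1}^{*},V_{2}^{*})$ of $H^{*}$. I then try to extend this partition to $H$ by placing $a$ in $V_{1}^{*}$ or in $V_{2}^{*}$. Setting $s_{i}=|V_{i}^{*}\cap S|$, we have $s_{1}+s_{2}=|S|$, $s_{1}\leq p-1$, and $s_{2}\leq q-1$. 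A direct count using $|S|\in\{\Delta-3,\Delta-2\}=\{(p-1)+(q-1)-2,(p-1)+(q-1)-1\}$ shows that at least one of the two placements of $a$ creates no $K_{p}$ or $K_{q}$ whose vertices lie entirely in $S\cup\{a\}$.

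The main obstacle is that $a$ has up to two further neighbors outside $V(K_1)\cup V(K_2)$, and these could combine with a nearly full $V_{i}^{*}\cap S$ to form a forbidden $K_{p}$ or $K_{q}$ once $a$ is added. My plan to deal with this is to swap one vertex of $S$ between $V_{1}^{*}$ and $V_{2}^{*}$: the hypothesis $p,q\geq 5$ keeps the clique-size bounds $s_{1}\leq p-1$ and $s_{2}\leq q-1$ valid after the swap, and the saturation of the vertices of $S$ (at most one outside neighbor each) limits the number of fresh potentially forbidden cliques a swap can introduce, so at worst a second swap neutralises them. The longest and most delicate step of the proof is a case analysis verifying that these at most two swaps succeed in every configuration of $|S|$ and of the few edges from $V(K_1)\cup V(K_2)$ to the outside; Proposition \ref{l1} will be used repeatedly to rule out auxiliary $\Delta$-cliques that a bad swap would have to create. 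Once the extension is completed, the resulting $(K_{p},K_{q})$-partition of $H$ contradicts the assumption that $H$ is a counter-example.
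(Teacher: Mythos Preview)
Your approach diverges from the paper's and has a genuine gap. You delete a single vertex $a\in V(K_1)\setminus V(K_2)$ and try to extend the resulting partition by swaps inside $S$. The paper instead deletes the full symmetric difference: in the case $|S|=\Delta-2$ it removes both vertices $x\in K_1\setminus S$ and $y\in K_2\setminus S$ (using $xy\notin E(H)$), and in the case $|S|=\Delta-3$ it removes all of $V(K_1)\cup V(K_2)$, partitions the remainder, and then reinserts $S$ and the four symmetric-difference vertices by hand. The point of removing more is that when $|S|=\Delta-3$ every vertex of $S$ is saturated inside $K_1\cup K_2$, so the reinserted piece $S$ sits as an isolated component in each side of the partition and cannot interact with the rest; this is what makes the extension clean.

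The gap in your plan appears exactly in the $|S|=\Delta-3$ case. You write that the ``main obstacle'' is the at most two neighbours of $a$ lying outside $V(K_1)\cup V(K_2)$, but you never account for the remaining vertex $c\in V(K_1)\setminus (S\cup\{a\})$, which stays in $H^{*}$, is adjacent to $a$ and to every vertex of $S$, and lands in $V_1^{*}$ or $V_2^{*}$ outside your control. Concretely, if $c\in V_1^{*}$ and $s_1=|V_1^{*}\cap S|=p-2$ (which is compatible with all your stated inequalities), then placing $a$ in $V_1^{*}$ immediately creates the $K_p$ on $\{a,c\}\cup(V_1^{*}\cap S)$; your swap mechanism only moves vertices of $S$ and so cannot dislodge $c$. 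A similar issue arises with the vertices of $V(K_2)\setminus S$, one of which may be adjacent to $a$. Your sketch does not explain how ``at most two swaps'' would handle these configurations, and it is not clear that they can. (A minor side remark: the appeal to Brooks' theorem when $\Delta(H^{*})<\Delta$ is unnecessary, since minimality is with respect to a fixed pair $(p,q)$ and directly yields a $(K_p,K_q)$-partition of any proper subgraph.)
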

	\begin{proof}
	 To prove, we consider the following two claims.
		\begin{claim}\label{c11} No two $\Delta-1$ cliques  intersect in  $\Delta-2$ vertices.
		\end{claim} 
		\begin{proof}[Proof of Claim~\ref{c11}]
			Suppose that there are two  $\Delta-1$ cliques, say $K$ and $K'$, which intersect in a $\Delta-2$ clique $K''$ and assume that $x\in K\setminus K''$, $y\in K'\setminus K''$. We note that each of these two vertices is adjacent to
			every element of $K''$, that is every vertex of $K''$ has degree $\Delta-1$ in $K\cup K'$. Therefore, $|N(z)\cap (H\setminus (K\cup K'))|\leq 1$ for each $z\in  V(K'')$. Now, there is on edge  $xy\notin E(H)$,  otherwise this copy along with $K''$ forms a $\Delta$ clique, a contradiction to $\omega(H)=\Delta-1$. Therefore, no $\Delta-1$ clique  intersects in  $x$ and $y$. Let $H'= H\setminus \{x,y\}$. By the minimality of $H$,  there exaists a $(S_1,S_2)$-decomposetion of $V(H')$, where $H'[S_1]$ is $K_{p}$-free and $H'[S_2]$ is $K_{q}$-free. Also, as $|K''|=\Delta-2$, without loss of generality we can  suppose that  $|S_1\cap V(K'')|=|V_1|=p-1$  and $|S_2\cap V(K'')|=|V_2|=q-2$. Now, consider $(W_1,W_2)$-decomposetion of $V(H)$, where $W_1=S_1$ and $W_2=S_2\cup\{x,y\}$. If $H[W_2]$ is $K_{q}$-free,  which contradicts the assumption that $H$ is a minimal counter-example and the proof is complete. So, we may  suppose that there exists at least one copy of $K_q$ in $ H[W_2]$. Since  $xy\notin E(H)$, no $q$-clique  intersects in  $x$ and $y$ in $H[W_2]$. Therefore there exists a vertex $w$ of $S_2\setminus V_2$,  such that $V_2\subseteq N(w)$ and $w$ is adjacent to at least one vertex of $\{x,y\}$ in $H$. Without loss of generality, assume that $xw\in E(H)$, that is  $H[V_2\cup \{x,w\}]\cong K_{q}$. In this case, we claim that $V_1\subseteq N(w)$. Otherwise, if there exists a vertex $v$ of $V_1$, such that $vw\notin E(H)$, then we set $S'_1=(S_1\setminus \{v\}) \cup \{u\}$ and $S'_2=(S_2\setminus \{u\}) \cup \{v\}$, where $u\in V_2$. Now, since $vw\notin E(H)$,  $V_2\setminus \{v\}\subseteq N(w)$, $|N(z)\cap S_2|\leq 1$ for each $z\in  V_2$	and $q \geq 5$,  it is easy to check that $H[S'_2]$  is $K_{q}$-free. Also, since $|N(z)\cap (S_1\setminus V_1)|\leq 1$ for each $z\in  V_1$ and $|N(u)\cap S_2|=q+1$, we have $|N(u)\cap S_1|=|V_1\setminus \{v\}|=p-2$, that is  $H[S'_1]$  is $K_{p}$-free. Which contradicts the assumption that $H$ is a minimal counter-example. Hence, assume that $V_1\subseteq N(w)$, that is $H[V(K'')\cup\{x,w\}]=K_{\Delta}$, a contradiction again. Thus, for any vertex $w$ of $S_2$, we have $V_2\nsubseteq N(w)$. Therefore as $xy\notin E(H)$ and  $|N(z)\cap (H\setminus (K\cup K'))|\leq 1$ for each $z\in  V(K'')$, one can say that $H[W_2]$ is  $K_{q}$-free, it contradicts the assumption that $H$ is a minimal counter-example. Therfore the cliam is true.
		\end{proof}
		\begin{claim}\label{c12}  No two $\Delta-1$ cliques  intersect in  $\Delta-3$ vertices.
		\end{claim}
		\begin{proof}[Proof of Claim~\ref{c12}]
			Suppose that there are two  $\Delta-1$ cliques, say $K$ and $K'$, which intersect in a $\Delta-3$ clique $K''$. Also,  assume that $x,y\in K\setminus K''$ and $x',y'\in K'\setminus K''$. We note that each of these four vertices is adjacent to
			every element of $K''$.  Therfore $|N(z)\cap (H\setminus (K\cup K'))|=0$ for each $z\in V(K'')$. Now, there is no copy of $K_3$ in the induced graph  by $\{x,x',y,y'\}$. Otherwise this copy along with $K''$ forms a $\Delta$ clique, a contradiction to $\omega(H)= \Delta-1$. Thus, we can assume that $xx', yy'\notin E(H)$. Therefore,  no $\Delta-1$ clique  intersects in  $\{x,x'\}$ and $\{y,y'\}$. Let $H''=H \setminus (K\cup K')$, by the minimality of $H$, there exists a  $(S'_1,S'_2)$-decomposetion of $V(H'')$, sush that $H''[S'_1]$ is $K_{p}$-free and $H''[S'_2]$ is $K_{q}$-free. Also, since $|K''|=\Delta-3$, one cane assume that there exists a  $(V_1,V_2)$-decomposetion of $V(K'')$, such that  $H[V_1]\cong K_{p-2}$ and $H[V_2]\cong K_{q-2}$. Now, set $H'=H''\cup K''= H\setminus \{x,x',y,y'\}$ and    concider $(S_1,S_2)$-decomposetion of $V(H')$ such that  $S_i=S'_i\cup V_i$ for $i=1,2$. Since every vertex of $K''$ has degree $\Delta$ in $K\cup K'$ for rach $i=1,2$, one can check that  $H'[V_i]$ is a connected component in $H'[S_i]$. Now,  consider $W_1=S_1\cup\{x,x'\}$ and $W_2=S_2\cup\{y,y'\}$. Since  $xx', yy'\notin E(H)$, and no $\Delta-1$ clique  intersects in  $\{x,x'\}$ and $\{y,y'\}$, also as $\Delta\geq 9$, $p\geq 5,q\geq 5$,  $|N(z')\cap S_i|\leq 2$ for each $z'\in \{x,x',y,y'\}$ and $|N(z)\cap S'_i|=0$ for each $z\in V(K'')$ and each $i\in\{1,2\}$, it is easy to check that $H[W_1]$  is $K_{p}$-free and $H[W_2]$ is $K_{p}$-free, which contradicts the assumption that $H$ is a minimal counter-example. Hence the claim is true.
		\end{proof}
		
		By $\Delta(H)=\Delta$,  it can be said that,  no two $\Delta-1$ cliques  intersect in fewer than $\Delta-3$ vertices. Otherwise, there is a vertex of $H$ with  degree at least $\Delta+1$, which is impossible. Therefore, by Claims \ref{c11} and \ref{c12}  the proof is complete.
	\end{proof}
	Now, in the following results by using    Theorem \ref{th1}, we  prove that   Theorem \ref{t1} holds  when   $\omega(H)=\Delta-1$ and $p\geq q\geq 5$.
	\begin{fact}\label{f1}
		Theorem \ref{t1} holds for the case that  $\omega(H)=\Delta-1$, and $p\geq q\geq 5$.
	\end{fact}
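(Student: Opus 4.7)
The plan is to argue by minimality. Suppose $H$ is a minimum counter-example to Theorem~\ref{t1} falling in the case of Fact~\ref{f1}. As noted in the preceding discussion we have $\chi(H)=\Delta$ and $\omega(H)=\Delta-1$, and by Theorem~\ref{th1} all $K_{\Delta-1}$ cliques of $H$ are pairwise vertex-disjoint. Fix one such clique $K$. Two structural consequences will drive the argument: (i) each $w\in V(K)$ has $\Delta-2$ neighbors inside $K$ and $\deg_H(w)\le \Delta$, so $w$ has at most two neighbors in $V(H)\setminus V(K)$; and (ii) for every $u\in V(H)\setminus V(K)$, $|N(u)\cap V(K)|\le \Delta-3$, since otherwise $\{u\}\cup(N(u)\cap V(K))$ would be a second $K_{\Delta-1}$ sharing at least $\Delta-2$ vertices with $K$, contradicting Theorem~\ref{th1}.

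The strategy is to first $(K_p,K_q)$-partition $H':=H\setminus V(K)$ and then extend. Applying Theorem~\ref{t1} to $H'$ (reducing to parameters $p'\le p$, $q'\le q$ with $p'+q'=\Delta(H')+1$ when $\Delta(H')<\Delta$, and using that a $(K_{p'},K_{q'})$-partition is automatically a $(K_p,K_q)$-partition, since $K_{p'}$-freeness implies $K_p$-freeness for $p'\le p$), I obtain a partition $V(H')=S_1\cup S_2$ with $H[S_1]$ being $K_p$-free and $H[S_2]$ being $K_q$-free. I then write $V(K)=U_1\cup U_2$ with $|U_1|=p-1$ and $|U_2|=q-1$, and set $V_i:=S_i\cup U_i$. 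Since $H[U_1]\cong K_{p-1}$ and $H[U_2]\cong K_{q-1}$ are themselves $K_p$- and $K_q$-free, any obstruction to $H[V_i]$ being $K_p$- or $K_q$-free is a ``crossing'' clique using vertices from both $S_i$ and $U_i$.

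A crossing $K_p$ in $H[V_1]$ with $b\ge 1$ vertices in $S_1$ and $p-b\ge 1$ in $U_1$ forces each such $U_1$-vertex to have at least $b$ neighbors outside $V(K)$, so fact (i) rules out $b\ge 3$. The remaining obstructions reduce to: (b=1) a vertex $u\in S_1$ with $U_1\subseteq N(u)\cap V(K)$; and (b=2) an edge $u_1u_2\in E(H[S_1])$ with $|U_1\cap N(u_1)\cap N(u_2)\cap V(K)|\ge p-2$; together with the symmetric list on the $(S_2,U_2)$ side (swapping $p\leftrightarrow q$). Two slack bounds will be crucial: $|V(K)\setminus N(u)|\ge 2$ for every $u\notin V(K)$ by (ii); and $|N(u_1)\cap N(u_2)\cap V(K)|\le \Delta-4$ for every edge $u_1u_2$ with both endpoints outside $V(K)$, for otherwise this common neighborhood together with $\{u_1,u_2\}$ would form a second $K_{\Delta-1}$ meeting $K$ in at least $\Delta-3$ vertices, contradicting Theorem~\ref{th1}.

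The main obstacle is producing a splitting $(U_1,U_2)$ that meets all of these constraints on both sides simultaneously. The intended approach is a swapping argument: start from an arbitrary partition of $V(K)$ of the prescribed sizes, and whenever a violation occurs, exchange a carefully chosen pair of vertices between $U_1$ and $U_2$ to kill that violation without creating a new one. The slack bounds above together with $p\ge q\ge 5$ and $\Delta\ge 13$ guarantee that a legal swap always exists, and a suitable potential function (e.g.\ total number of violations) shows the process terminates. Once it does, $V_1=S_1\cup U_1$ and $V_2=S_2\cup U_2$ witness that $H$ is $(K_p,K_q)$-partitionable, contradicting the minimality of $H$ and proving Fact~\ref{f1}.
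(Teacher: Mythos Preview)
Your setup matches the paper's: take a minimal counter-example $H$, fix a $(\Delta-1)$-clique $K$ (whose $(\Delta-1)$-cliques are disjoint by Theorem~\ref{th1}), partition $H'=H\setminus V(K)$ into $(S_1,S_2)$ by minimality, and then try to split $V(K)$ as $U_1\cup U_2$ with $|U_1|=p-1$, $|U_2|=q-1$. You also correctly classify the possible crossing obstructions as $b\in\{1,2\}$ and derive the slack bounds from (i) and (ii).

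The gap is in the last paragraph. You assert that a swapping procedure terminates under ``a suitable potential function (e.g.\ total number of violations)'', but you neither specify the potential nor verify that a legal swap strictly decreases it. The naive count of violations is not obviously monotone: swapping $x\in U_1$ with $y\in U_2$ to destroy a $b=1$ obstruction for some $u\in S_1$ sends $x$ into $U_2$, where $x$ can create a fresh $b=1$ or $b=2$ obstruction on the $S_2$ side; nothing in your facts (i), (ii), or the common-neighbour bound $|N(u_1)\cap N(u_2)\cap V(K)|\le \Delta-4$ prevents this. So ``a legal swap always exists'' and ``the process terminates'' are both unproven here, and they are exactly the content of the argument.

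The paper does not try to drive violations to zero directly. Instead it minimizes a different potential,
\[
e(U_1,U_2)=|E(H[S_1,U_1])|+|E(H[S_2,U_2])|,
\]
over all size-$(p-1,q-1)$ splits of $V(K)$. Minimality of $e$ yields structural restrictions (Claim~\ref{c13}: a vertex of $U_1$ with two $S_1$-neighbours forces every vertex of $U_2$ to have zero $S_2$-neighbours, etc.), and then a short but genuine case analysis (Claim~\ref{c14} and the paragraph following it) exhibits an explicit split with no obstruction on either side. That case analysis is about a page and is where the conditions $p\ge q\ge 5$ are actually used; it does not fall out of a one-line monotonicity claim. To complete your proof you would need either to carry out an equivalent case analysis, or to name a potential and prove that every violating configuration admits a swap that strictly decreases it.
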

	
	\begin{proof}[\bf Proof of  Fact \ref{f1}]
		Without loss of generality we may assume that $H$ is a minimal counter-example of Theorem \ref{t1} with maximum
		degree $\Delta$  and $\omega(H)=\Delta-1$. Let  $K$ is a $\Delta-1$ clique of $H$. Set $H'=H\setminus K$. Hence,  each vertex of $K$ has at most two adjacents in $H'$ and by Theorem \ref{th1}, any vertex of $H'$ has at most $\Delta-3$ neighbors in $K$. Since $H$ is  a minimal counter-example, there exists a $(W_1,W_2)$-decomposition of $V(H')$, such that $H'[W_1]$ is $K_p$-free and $H'[W_2]$ is $K_q$-free. Also, as  $|K|=\Delta-1$, there exists a $(V_1,V_2)$-decomposition of $V(K)$, such that $|V_1|=p-1$ and   $|V_2|=q-1$. So, it is clear that  $H[V_1]$ is $K_{p}$-free and $H[V_2]$ is $K_{q}$-free. Set $\F$ as follows.
		\[\F=\{(V',V'')~~|~~V'\cap V''=\emptyset,~V'\cup V''=V(K),~ |V'|=p-1, |V''|=q-1\}.\]
		For each $i\in [|\F|]$, define $e_i=|E(H[W_1,V_i])|+|E(H[W_2,V'_i])|$ where,  $(V_i, V'_i)$ is a member of $\F$.  	Without loss of generality we may suppose that $e_1=|E(H[W_1,V_1])|+|E(H[W_2,V_1'])|\leq |E(H[W_1,V_i])|+|E(H[W_2,V_i'])|$ for each $(V_i, V_i')\in \F$. Now, we have the following claims.
		\begin{claim}\label{c13}  If there exists a vertex $v$ of $V_1(V'_1)$, such that $|N(v)\cap W_1(W_2)|=2$, then for each $v'\in V'_1(V_1)$ we have $|N(v')\cap W_2(W_1)|=0$. Also, if there exists a vertex $v$ of $V_1(V'_1)$, such that $|N(v)\cap W_1(W_2)|= 1$, then for each $v'\in V'_1(V_1)$ we have $|N(v')\cap W_2(W_1)|\leq 1$.
		\end{claim} 
		\begin{proof}[Proof of Claim~\ref{c13}]	
			Without loss of generality assume that 	$|N(v)\cap W_1|=2$ for some $v\in V_1$ and on contrary, suppose that there exists a vertex $v'$ of $V_1'$, such that $|N(v')\cap W_2|\geq 1$. In this case, one can check that $(V_2,V'_2)=(V_1\setminus\{v\}\cup \{v'\}, V'_1\setminus\{v'\}\cup \{v\})\in \F$. Also as each vertex of $K$ has at most two adjacents in $H'$, then it is easy to say that $e_2=|E(H[W_1,V_2])|+|E(H[W_2,V_2'])|\leq e_1-2$, which is a contradiction to the  minimality of $e_1$. For other cases the proof is the same and the claim is true.
		\end{proof}		
		
		\begin{claim}\label{c14}   If there exist two vertices $w,w'$  of $W_1$,  such that:
			\[|N(w) \cap N(w')\cap V_1|\geq p-2, \]
		or	if there exist two vertices $w,w'$  of $W_2$ such that:
			\[|N(w) \cap N(w')\cap V_1'|\geq q-2, \]
			then the proof is complete. 
		\end{claim} 
		\begin{proof}[Proof of Claim~\ref{c14}]	
			Without loss of generality assume that  $|N(w_1) \cap N(w_2)\cap V_1|\geq p-2$ for some $w_1, w_2\in W_1$. Also, we may suppose that $M=\{v_1,\ldots, v_{p-2}\}\subseteq N(w_1) \cap N(w_1)\cap V_1$. 	Since $|M|=p-2\geq 3$,  by Claim \ref{c13}, we have $|N(v')\cap W_2|=0$ for each $v'\in V_1'$. Since each vertex of $K$ has at most two adjacents in $H'$ and $\{w_1,w_2\}\subseteq N(v)$ for each $v\in M$ and as  any vertex of $H'$ has at most $\Delta-3$ neighbors in $K$, there exist at least two vertices $v_1'',v_2''$ of $V(K)\setminus M$,  such that $w_iv''_i\notin E(H)$ for $i=1,2$. Now, we set $(V_2 ,V'_2)$ as follows.
			\[(V_2,V'_2)=(M\setminus \{v_1\}\cup \{v''_1,v''_2\}, V(K)\setminus V_2).\]
			One can say that $(V_2,V'_2)\in \F$. As $|M|\geq 3$,   by Claim \ref{c13}, we have $|N(v')\cap W_2|=0$ for at least $q-2$ members of   $V_2'$, which means that $H[ V'_2\cup W_2]$ is $K_q$-free. Also, as $v''_1,v''_2\in V_2$, $w_1v''_1,w_2v''_2 \notin E(H)$, $M\setminus\{v_1\}\subseteq N(w_i)$, each vertex of $V_2$ has at most two adjacents in $W_1$, and $p\geq 5$, it is easy to check that  $H[ V_2\cup W_1]$ is $K_p$-free.  Hence, the proof is complete.
		\end{proof} 
		Now, by the assumption that $H$ is a minimal counter-example of Theorem \ref{t1}, one can assume that there exists a copy of $K_p$ in $H[V_1\cup W_1]$ or there is a copy of $K_q$ in $H[V_2\cup W_2]$. 	Without loss of generality we may assume that $K'=K_p\subseteq H[V_1\cup W_1]$. By considering the vertices of $V(K')$ and by Claim \ref{c14}, one can assume that $|V(K')\cap V_1|=p-1$.  	Without loss of generality assume that  $w_1\in  V(K')\cap W_1$, that is  $V_1\subseteq N(w_1)$. 	Now, by Claim \ref{c13},  we have $|N(v')\cap W_2|\leq 1$ for each $v'\in V_2'$. Since each vertex of $K$ has at most two adjacents in $H'$, any vertex of $H'$ has at most $\Delta-3$ neighbors in $K$, and $w_1v\in E(H)$ for each $v\in V_1$,   there exist at lest two vertices $v_1',v_2'$ of $V'_1$, such that $w_1v'_i\notin E(H)$. Now, we set $(V_2, V'_2)$ as follows.
		\[(V_2,V'_2)=(  V_1\setminus \{v_1\})\cup \{v'_1\}, V(K)\setminus V_2).\]
		One can say that $(V_2,V'_2)\in \F$.	If $H[V_2\cup W_1]$ be $K_p$-free and $H[V'_2\cup W_2]$ be $K_q$-free, then the proof is complete. Also, if there exists a copy of $K_p$ in $H[V_2\cup W_1]$, then as $V_2\setminus \{v'_1\}\subseteq  N(w_1)$, one can check that there exist two vertices $w_1,w_2$ of $W_1$, such that $|N(w_1) \cap N(w_2)\cap V_2|= p-2$ and so the proof is complete by Claim \ref{c14}.  So, assume that $H[V_2\cup W_1]$ is $K_p$-free and $H[V'_2\cup W_2]$ has a copy $K''$ of $K_q$. By Claim \ref{c13} and as $|N(v')\cap W_2|\leq 1$ for each $v'\in V'_2$, one can assume that there exists a member $w'_1$ of $W_2$, so that $V'_2\subseteq N(w_2)$ and $V(K'')= V'_2\cup \{w_1'\}$.  As  any vertex of $H'$ has at most $\Delta-3$ neighbors in $K$, there exists at least one vertex $v_2$ of $V_2\setminus \{v'_1\}$, such that $w'_1v_2\notin E(H)$. Now, we set $(V_3,V'_3)$ as follows.
		\[(V_3,V'_3)=(  V_2\setminus \{v_2\})\cup \{v_1\}, V(K)\setminus V_2).\]		
		One can say that $(V_3,V'_3)\in \F$. Since $V_3\setminus \{v'_1\}\subseteq  N(w_1)$,  $|N(v)\cap W_1|\leq 1$ for each $v\in V_1$, and $p\geq 5$,  one can say that   $H[V_3\cup W_1]$ is $K_p$-free. Also, as $V'_3\setminus \{v_2\}\subseteq  N(w'_1)$,  $|N(v')\cap W_2|\leq 1$ for each $v'\in V_1'$, and $q\geq 5$,   one can check that   $H[V'_3\cup W_2]$ is $K_q$-free, which contradicts the assumption that $H$ is a minimal counter-example. Hence the assumption does not holds,  and the proof of the fact is complete.		
	\end{proof}
	By Fact \ref{f1}, Theorem \ref{t1} holds for the case that  $\omega(H)=\Delta-1$. So, one can suppose that  $\omega(H)\leq \Delta-2$. Hence, by using  Fact \ref{f1}, we can prove the following  result.
	\begin{fact}\label{fa1}
		Theorem \ref{t1} holds for the case that  $H$ is non-$\Delta$-regular graph, $\omega(H)\leq \Delta-2$, and $p\geq q\geq 5$.
	\end{fact}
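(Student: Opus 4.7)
The plan is to prove Fact \ref{fa1} by reducing it directly to Fact \ref{f1} via a small gadget construction: I would build an auxiliary graph $H^+$ that satisfies the hypotheses of Fact \ref{f1} and then read off a valid partition of $H$ by restricting a partition of $H^+$.

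Since $H$ is non-$\Delta$-regular, fix a vertex $v \in V(H)$ with $\deg_H(v) \leq \Delta - 1$. Let $K$ be a fresh vertex-disjoint copy of $K_{\Delta - 1}$, choose any $w \in V(K)$, and define $H^+$ to be the disjoint union of $H$ and $K$ together with the single bridging edge $vw$. I would first verify that $\Delta(H^+) = \Delta$: indeed $\deg_{H^+}(v) = \deg_H(v) + 1 \leq \Delta$, $\deg_{H^+}(w) = \Delta - 1$, every vertex of $V(K) \setminus \{w\}$ has degree $\Delta - 2$, and every other vertex of $H$ retains its $H$-degree. Next I would check that $\omega(H^+) = \Delta - 1$: the planted clique $K$ already witnesses $\omega(H^+) \geq \Delta - 1$, while any clique of $H^+$ lies either entirely in $H$ (of size at most $\omega(H) \leq \Delta - 2$), entirely in $K$ (of size at most $\Delta - 1$), or uses the bridging edge $vw$; in this last case $v$ and $w$ have no common neighbor in $H^+$ (the neighbors of $v$ lie in $V(H) \cup \{w\}$ and those of $w$ in $V(K) \cup \{v\}$), so such a clique is exactly $\{v, w\}$.

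With $\Delta(H^+) = \Delta \geq 13$ and $\omega(H^+) = \Delta(H^+) - 1$, Fact \ref{f1} applies to $H^+$ with the same integers $p \geq q \geq 5$ satisfying $p + q = \Delta + 1$, yielding a partition $(V_1, V_2)$ of $V(H^+)$ such that $H^+[V_1]$ is $K_p$-free and $H^+[V_2]$ is $K_q$-free. Setting $W_i = V_i \cap V(H)$ for $i = 1, 2$, the induced subgraph $H[W_i]$ is contained in $H^+[V_i]$ and inherits the required clique-freeness, so $(W_1, W_2)$ is a $(K_p, K_q)$-partition of $V(H)$. If one of the $W_i$ happens to be empty (which forces $H$ to already be $K_{p_i}$-free on the other side), then moving any single vertex across the partition produces a genuine two-part partition without destroying either clique condition; either way $H$ is $(K_p, K_q)$-partitionable.

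The only subtlety is checking that the bridging edge $vw$ does not create a new large clique in $H^+$. This is guaranteed by two design choices: selecting $v$ with $\deg_H(v) \leq \Delta - 1$ respects the degree budget at $v$, and attaching the gadget through a single vertex $w$ of $K$ ensures $v$ and $w$ share no common neighbor, so the bridge cannot extend into any triangle, let alone a larger clique. No delicate swap argument is needed, and the full strength of Fact \ref{f1} carries out the clique partitioning for us.
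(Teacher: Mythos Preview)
Your proof is correct and follows exactly the same approach as the paper: both attach a fresh copy of $K_{\Delta-1}$ to a vertex of $H$ of degree at most $\Delta-1$ by a single bridging edge, apply Fact~\ref{f1} to the resulting graph, and then restrict the partition back to $V(H)$. Your version is simply more explicit in verifying that $\Delta(H^+)=\Delta$ and $\omega(H^+)=\Delta-1$ (and in handling the harmless edge case of an empty part), but the construction and the reduction are identical to the paper's.
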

\begin{proof}[\bf Proof of  Fact \ref{fa1}]
		As 	  $H$ is a  non-$\Delta$-regular graph, there exists at least one member $v$ of $V(H)$, such that $\deg(v)\leq \Delta-1$. Consider the complete graph $K=K_{\Delta-1}$  and let $y$ be a vertex of $K$. Let $H'=(H\cup K)+vy$. It is clear that  $H'$ is a graph with $\Delta(H')=\Delta$ and $\omega(H')=\Delta-1$. Therefore, by Fact \ref{f1}, there exists a $(V'_1,V'_2)$-decomposition of $V(H')$, such that $\omega(H'[V'_1])\leq p-1$ and $\omega(H'[V'_2])\leq q-1$. Hence, as $H\subseteq H'$, one can say that there exists a $(V_1,V_2)$-decomposition of $V(H)$, such that $\omega(H[V_1])\leq p-1$ and $\omega(H[V_2])\leq q-1$, where $V_1=V(H)\cap V'_1$ and $V_2=V(H)\cap V'_2$.
		
	\end{proof}
	
	Therfore, by Fact \ref{f1} and Fact \ref{fa1}, suppose that  $H$ is a  $\Delta$-regular graph with $\chi(H)=\Delta\geq 13$, and $\omega(H)\in \{ \Delta-2, \Delta-3\}$.
	
	\begin{fact}\label{f2}
		Theorem \ref{t1} holds for the case that $H$ is a  $\Delta$-regular graph with $\chi(H)=\Delta$, $\omega(H)=\Delta-2$,  and $p\geq q\geq 6$.
	\end{fact}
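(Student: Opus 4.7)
The plan is to adapt the argument of Fact~\ref{f1} to the slightly smaller maximum clique. Suppose for contradiction that $H$ is a minimal counter-example to Theorem~\ref{t1} for the specified parameters, and fix a $(\Delta-2)$-clique $K$ in $H$. Since $H$ is $\Delta$-regular and every vertex of $K$ has $\Delta-3$ neighbours inside $K$, each vertex of $K$ has \emph{exactly} three neighbours in $H':=H\setminus V(K)$. Moreover, because $\omega(H)=\Delta-2$, any vertex of $H'$ has at most $\Delta-3$ neighbours in $V(K)$, lest $\{v\}\cup\bigl(N(v)\cap V(K)\bigr)$ contain a copy of $K_{\Delta-1}$.

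First I would prove a disjointness lemma analogous to Theorem~\ref{th1}: no two $(\Delta-2)$-cliques of $H$ intersect in $\Delta-3$ or $\Delta-4$ common vertices. Each case reduces to an exchange argument in the spirit of Claims~\ref{c11} and \ref{c12}: if two such cliques $K$ and $K'$ overlap in a sub-clique $K''$ of that size, the vertices of $K\cup K'$ have very few neighbours outside $K\cup K'$, so minimality applied to $H\setminus(K\cup K')$ produces a valid partition which I would extend by distributing the vertices of $(K\cup K')\setminus K''$ according to a careful partition of $K''$, using $p\geq q\geq 6$ to rule out the creation of an unwanted $K_p$ or $K_q$.

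Next, by minimality of $H$, the graph $H'$ admits a $(W_1,W_2)$-partition with $H'[W_1]$ being $K_p$-free and $H'[W_2]$ being $K_q$-free. Because $|V(K)|=\Delta-2=(p-1)+(q-2)$, there is one unit of slack in distributing $V(K)$, so the admissible sizes of $V_1\subseteq V(K)$ lie in $\{p-2,p-1\}$. Let
\[
\F=\bigl\{(V',V'')\;:\;V'\cup V''=V(K),\;V'\cap V''=\emptyset,\;|V'|\in\{p-2,p-1\}\bigr\},
\]
and choose $(V_1,V_1')\in\F$ minimising
\[
e_1=\bigl|E(H[W_1,V_1])\bigr|+\bigl|E(H[W_2,V_1'])\bigr|.
\]
Following the template of Claims~\ref{c13} and \ref{c14}, I would then establish two structural properties from the minimality of $e_1$: an exchange-type claim constraining the edges from $V_1'$ to $W_2$ whenever a vertex of $V_1$ has several neighbours in $W_1$ (and symmetrically), and a codegree claim that completes the proof whenever two vertices of $W_1$ share $p-2$ common neighbours in $V_1$ or two vertices of $W_2$ share $q-2$ common neighbours in $V_1'$. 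If $H[W_1\cup V_1]$ is $K_p$-free and $H[W_2\cup V_1']$ is $K_q$-free then we are done; otherwise, as in the closing paragraph of Fact~\ref{f1}, I would locate a swap between $V_1$ and $V_1'$ that preserves clique-freeness on one side while destroying the offending clique on the other, exploiting the one-vertex slack in $|V_1|$.

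\emph{The main obstacle} is that each vertex of $V(K)$ now has three (rather than two) neighbours outside $K$, so an individual swap can alter $e_1$ by as much as three and can more easily create new forbidden cliques. This is precisely why the hypothesis $q\geq 6$ is needed: after a swap we still require $q-2\geq 4$ common neighbours in the relevant side for the codegree claim to apply, and the additional slack coming from $|V(K)|=\Delta-2$ rather than $\Delta-1$ is what allows enough freedom to perform these swaps safely.
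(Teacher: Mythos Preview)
Your overall framework---minimal counter-example, remove a $(\Delta-2)$-clique $K$, partition $H'$, optimise over partitions of $V(K)$, then swap---matches the paper. But two points diverge.

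First, the disjointness lemma you propose is unnecessary. The only fact the paper needs about external vertices is that every $v\in V(H')$ has at most $\Delta-3$ neighbours in $K$, and this follows immediately from $\omega(H)=\Delta-2$: if $v$ were adjacent to all $\Delta-2$ vertices of $K$, then $\{v\}\cup V(K)$ would be a $(\Delta-1)$-clique. In Fact~\ref{f1} the corresponding bound needed Theorem~\ref{th1} because $\omega(H)=\Delta-1$ there; here it is free.

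Second, the paper does \emph{not} use a codegree claim in the style of Claim~\ref{c14}. Instead it exploits the one-vertex slack in a different way. The minimisation of $e_1$ over the enlarged family $\F$ (which now allows $|V'|\in\{p-2,p-1\}$) yields not only the symmetric exchange claim (Claim~\ref{c15}) but also an \emph{asymmetric} one, Claim~\ref{c16}: if $|V_1|=p-1$ then every $v\in V_1$ has at most one neighbour in $W_1$, because otherwise moving $v$ across drops $e_1$. This is the crucial use of the slack, and it is stronger than anything a swap-only argument gives. From Claim~\ref{c16} the paper proves directly (Claim~\ref{c17}) that the side with $p-1$ vertices is already $K_p$-free, so the entire remaining work is to destroy a possible $K_q$ on the $(q-2)$-side by one or two targeted swaps, using Proposition~\ref{l1} to locate suitable non-edges. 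Your plan to port Claim~\ref{c14} would instead have to cope with three external neighbours per vertex of $K$, and the ``$|N(v')\cap W_2|=0$'' conclusion you relied on in Fact~\ref{f1} weakens to ``$\le 1$'', which makes the codegree route noticeably harder to close; the paper sidesteps this by using the slack to force the degree-one bound on the larger side up front.
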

	
	\begin{proof}[\bf Proof of  Fact \ref{f2}]
		Without loss of generality we may suppose that $H$ is a minimal counter-example of Theorem \ref{t1} where $H$ is a  $\Delta$-regular, $\chi(H)=\Delta$, and $\omega(H)=\Delta-2$. Suppose that $K$ is a $\Delta-2$ clique of $H$. Set $H'=H\setminus K$. Therefore, any member of $K$ has  three adjacents in $H'$ and by  $\omega(H)=\Delta-2$,  any member of $H'$ has at most $\Delta-3$ neighbors in $K$. Since $H$ is  a minimal counter-example,  one can suppose that there is a $(W_1,W_2)$-decomposition of $V(H')$, such that $H'[W_1]$ is $K_p$-free and $H'[W_2]$ is $K_q$-free.  Since $|K|=\Delta-2$, we can  decompose   $V(K)$ into $(V', V'')$, where $|V'|=p-1$ and $|V''|=q-2$, or $|V'|=p-2$ and $|V''|=q-1$. It is clear that  in any case, $H[V']$ is $ K_{p}$-free and $H[V'']$ is $ K_{q}$-free. Set $\F$ as follows.
		\[\F=\{(V',V'')~~|~~V'\cap V''=\emptyset,~V'\cup V''=V(K),~ p-2\leq |V'|\leq p-1, q-2\leq |V''|\leq q-1\}.\]
		For each $i\in [|\F|]$, assume that $e_i=|E(H[W_1,V_i])|+|E(H[W_2,V_i'])|$, where $(V_i,V'_i)$ is a member of $\F$. 	Without loss of generality we may suppose that $e_1=|E(H[W_1,V_1])|+|E(H[W_2,V'_1])|\leq |E(H[W_1,V_i])|+|E(H[W_2,V_i'])|$ for each $(V_i,V'_i)\in \F$. Now, we have the following claims.
		\begin{claim}\label{c15}  If there exists a vertex $v$ of $V_1(V'_1)$,  such that $|N(v)\cap W_1(W_2)|\geq 2$, then for each $v'\in V'_1(V_1)$, we have $|N(v')\cap W_2(W_1)|\leq 1$. Also,  if there exists a vertex $v$ of $V_1(V'_1)$,  such that $|N(v)\cap W_1(W_2)|=3$, then for each $v'\in V'_1(V_1)$, we have $|N(v')\cap W_2(W_1)|=0$.
		\end{claim} 
		\begin{proof}[Proof of Claim~\ref{c15}]	
			Without loss of generality assume that 	$|N(v)\cap W_1|=2$ for some $v\in V_1$. Also, on contrary, suppose that there exists a vertex $v'$ of $V_1'$,  such that $|N(v')\cap W_1|\geq 2$. Hence, one can check that $(V_2,V_2')=(V_1\setminus\{v\}\cup \{v'\}, V'_1\setminus\{v'\}\cup \{v\})\in \F$. Also, as any member of $K$ has  three adjacents in $H'$, then it is easy to say that $e_2=|E(H[W_1,V_2])|+|E(H[W_2,V_2'])|\leq e_1-2$, which is a contradiction to the minimality of $e_1$ and the proof is complete. For other cases the proof is the same and the claim is true.
		\end{proof}		
		\begin{claim}\label{c16}  If $|V_1|=p-1(~or~|V_2|=q-1)$, then for each vertex $v$ of $V_1(V_2)$, we have $|N(v)\cap W_1(W_2)|\leq 1$.
		\end{claim} 
		\begin{proof}[Proof of Claim~\ref{c16}]	
			Without loss of generality assume that  $|V_1|=p-1$ and	$|N(v)\cap W_1|\geq 2$ for some $v\in V_1$. Now, set $(V_2, V_2')=(V_1\setminus\{v\}, V'_1\cup \{v\})\in \F$. Hence, as any member of $V(K)$ has three neighbors in $V(H')$,  it is easy to say that $e_2=|E(H[W_1,V_2])|+|E(H[W_2,V_2'])|\leq e_1-1$, which is a contradiction to the  minimality of $e_1$.
		\end{proof}	
		So, as $p-2\leq |V_1|\leq p-1$ and $ q-2\leq |V_2|\leq q-1$,	without loss of generality we may assume that  $|V_1|=p-1$ and $ |V_2|=q-2$. Now, we have a claim as follows. 	
		\begin{claim}\label{c17}     $H[V_1\cup W_1]$ is $K_p$-free.
		\end{claim} 
		\begin{proof}[Proof of Claim~\ref{c17}]	
			On contrary, assume that $K'$ is a copy of $K_p$ in  $H[V_1\cup W_1]$. As $|V_1|=p-1$,  by Claim \ref{c16},  there should be a vertex $w_1$ of $W_1$,  such that $V_1\subseteq N(w_1)$ and  $|N(v)\cap W_1|= 1$ for each $v\in V_1$. Also, by Claim \ref{c15},  one can say that  $|N(v')\cap W_2|\leq  2$ for each $v'\in V_2$. As  any vertex of $W_1$ has at most $\Delta-3$ neighbors in $K$, there exists at least one vertex $v'_1$ of $V_2$, such that $w_1v'_1\notin E(H)$. Now, we set $(V'_1,V'_2)$ as follows.
			\[(V'_1,V'_2)=( V_1\setminus \{v_1\})\cup \{v'_1\}, V(K)\setminus V'_1).\]
			It is clear that $(V'_1,V'_2)\in \F$. Also, as $|V'_1|=p-1\geq 5$ and $w_1v'_1\notin E(H)$, by Claim \ref{c16},  it can be said that  $H[V'_1\cup W_1]$ is $K_p$-free. If  $H[V'_2\cup W_2]$ be $K_q$-free, then the proof is complete. So, we may assume that there exists a copy $K''$ of $K_q$  in $H[V'_2\cup W_2]$. For each $v'_i\in V'_2$, as $|N(v'_i)\cap W_2|\leq 2$ and $|V'_2|=q-2$, it is easy to say that there exist two members $w'_1,w'_2$ of $W_2$, such that $w'_1w'_2\in E(H)$ and $V_2'\subseteq N(w'_i)$ for $i=1,2$, that is $V(K'')=V'_2\cup \{w'_1,w'_2\}$. Therefore, we have  $|N(v')\cap W_2|= 2$ for each $v'\in V'_2$. As  any vertex of $W_2$ has at most $\Delta-3$ neighbors in $K$ and $V_2'\subseteq N(w'_i)$, then by Proposition \ref{l1}, there exist at least two members $y_1,y_2$ of $V'_1$, such that $w'_iy_i\notin E(H)$. 	Without loss of generality assume that $y_1\neq v'_1\in V'_1$, where $y_1w'_1\notin E(H)$.  Now, we set $(V''_1,V''_2)$ as follows.
			\[(V''_1,V''_2)=( V'_1\setminus \{y_1\})\cup \{v_1\}, V(K)\setminus V''_1).\]
			It is clear that $(V''_1,V''_2)\in \F$, $|V''_1|=p-1\geq 5$, $v'_1\in V''_1$ and $w_1v'_1\notin E(H)$. So, by Claim \ref{c16}, it can be said that  $H[V'_1\cup W_1]$ is $K_p$-free. Also, as $|N(v')\cap W_2|= 2$ for each $v'\in V''_2\setminus \{y_1\}$,  $w'_1w'_2\in E(H)$, $w'_1y_1\notin E(H)$, $V_2''\setminus \{y_1\}\subseteq N(w'_i)$ for $i=1,2$, and $q\geq 6$, one can say that $H[V''_2\cup W_2]$ is $K_q$-free and the proof is complete.
		\end{proof} 
		Now as $|V_1|=p-1$, by Claim \ref{c17} we have $H[V_1\cup W_1]$ is $K_p$-free. Therfore, by the assumption that $H$ is a minimal counter-example of Theorem \ref{t1}, we can assume that there exists a copy $K'$ of $K_q$ in $H[V_2\cup W_2]$. By considering the vertices of $V(K')$ and as any member of $K'$ has  three adjacents in $H'$,  one can assume that $q-3\leq |V(K')\cap V_2|\leq q-2$. First assume that $|V(K')\cap V_2|= q-3$, that is there exist three vertices $w'_i$, $i=1,2,3$ of $W_2$,  such that $w'_iw'_j\in E(H)$ and $|V_2\cap  N(w'_1)\cap  N(w'_2)\cap  N(w'_3)|\geq |V_2|-1= q-3$.	Without loss of generality assume that $V''_2=V_2\setminus \{v''_1\}\subseteq V_2\cap  N(w'_1)\cap  N(w'_2)\cap  N(w'_3)$.  By Claim \ref{c15}, one can say that  $|N(v)\cap W_1|=0$ for each $v\in V_1$. Also, as  $|V_2\cap  N(w'_1)\cap  N(w'_2)\cap  N(w'_3)|\geq |V_2|-1= q-3$ and $|V_2|= q-2$, then by Proposition \ref{l1}, it can be said that there exist  two members $\{v_1,v_2\}$ of $V_1$, such that $v_iw'_i\notin E(H)$. Now,  we define $(V'_1,V'_2)$ as follows.
		\[(V'_1,V'_2)=( V(K)\setminus V'_2, V_2\setminus\{v''_1,v''_2 \} \cup \{v_1,v_2\} ).\]
		It is clear that $(V'_1,V'_2)\in \F$, $|V'_1|=p-1\geq 5$, and $|N(v)\cap W_1|=0$ for at least $p-3$ vertices of $V'_1$. So,  it can be said that  $H[V'_1\cup W_1]$ is $K_p$-free. Also, as $q\geq 6$, and for each $v'\in V'_2\setminus \{v_1,v_2\}$, $|N(v')\cap W_2|= 3$ and $w'_iv'\in E(H)$ for each $i=1,2,3$, also as $v_iw'_i\notin E(H)$, one can say that $H[V'_2\cup W_2]$ is $K_q$-free and the proof is complete. So, we may suppose that  $|V(K')\cap V_2|= q-2$, that is there exist two vertices $w'_i$, $i=1,2$ of $W_2$, such that $w'_1w'_2\in E(H)$ and $V_2\subseteq  N(w'_i)$ for $i=1,2$. By Claim \ref{c15}, for each $v\in V_1$, we have $|N(v)\cap W_1|\leq 1$. Also, by Proposition \ref{l1} there exist two members $\{v_1,v_2\}$ of $V_1$, such that $v_iw'_i\notin E(H)$ for $i=1,2$.  In this case, we set $(V'_1,V'_2)$ as follows.
		\[(V'_1,V'_2)=( V(K)\setminus V_2, V_2\setminus\{v'_1,v'_2\} \cup \{v_1,v_2\} ).\] 
		It is clear that $(V'_1,V'_2)\in \F$. Also, as $|N(v')\cap W_2|\leq 3$, $w'_iv'\in E(H)$ for each $v'\in V'_2\setminus \{v_1,v_2\}$, and $v_iw'_i\notin E(H)$, one can say that $H[V'_2\cup W_2]$ is $K_q$-free. Now, assume that $H[V'_1\cup W_1]$ has a copy of $K_p$. Therefore, as $p, q\geq 6$ and by Claim \ref{c15}, we have $|N(v)\cap W_1|= 1$ for each $v\in V'_1\setminus \{v'_1,v'_2\}$. One can assume that there exists a vertex $w_1$ of $W_1$,  such that $V'_1\subseteq N(w_1)$. Hence, we should have $|N(v)\cap W_1|= 1$ for each $v\in V'_1$ and $|N(v')\cap W_2|= 2$ for each $v'\in V'_2\setminus\{v_1,v_2\}$. As $H[V_1\cup W_1]$ is $K_p$-free and  $V'_1\subseteq N(w_1)$, one can assume that $w_1v_i\notin E(H)$ for at least one $i\in\{1,2\}$. 	Without loss of generality  assume that $w_1v_1\notin E(H)$. Now, we set $(V''_1,V''_2)$ as follows.
		\[(V''_1,V''_2)=( V_1\setminus\{v_2\}\cup\{v'_1\},  V(K) \setminus V''_1).\] 
		It is easy to say that $(V''_1,V''_2)\in \F$, $|V''_1|=p-1\geq 5$, $v_1w_1\notin E(H)$, $V''_1\setminus \{v_1\}\subseteq N(w_1)$, and $|N(v)\cap W_1|= 1$ for each $v\in V''_1$. So,  it can be said that  $H[V''_1\cup W_1]$ is $K_p$-free. Also, as $|N(v')\cap W_2|= 2$ for each $v'\in V''_2\setminus \{v_2\}$, $ V''_2\setminus \{v_2\}\subseteq N(W'_i)$ for $i=1,2$, and  $w'_2v'_2\notin E(H)$, one can say that $H[V'_2\cup W_2]$ is $K_q$-free. Hence, the proof is complete.	
	\end{proof}	

	\begin{fact}\label{f3}
		Theorem \ref{t1} holds for the case that $H$ is a  $\Delta$-regular graph with $\chi(H)=\Delta$, $\omega(H)=\Delta-3$, and $p\geq q\geq 7$.
	\end{fact}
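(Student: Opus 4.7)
The plan is to mirror the structure of Facts \ref{f1} and \ref{f2}, now with $\omega(H)=\Delta-3$. Let $H$ be a minimal counter-example and fix a clique $K$ of size $\Delta-3$ in $H$. Since $H$ is $\Delta$-regular, each vertex of $K$ has exactly four neighbors in $H'=H\setminus K$, and by $\omega(H)=\Delta-3$ each vertex of $H'$ has at most $\Delta-4$ neighbors in $K$ (otherwise $\{v\}\cup N_K(v)$ would form a $(\Delta-2)$-clique). By minimality, there is a decomposition $(W_1,W_2)$ of $V(H')$ with $H'[W_1]$ being $K_p$-free and $H'[W_2]$ being $K_q$-free. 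Since $|K|=\Delta-3=p+q-4$, any partition $(V_1,V_2)$ of $V(K)$ with $p-3\leq |V_1|\leq p-1$ (and correspondingly $q-3\leq |V_2|\leq q-1$) gives $H[V_1]$ that is $K_p$-free and $H[V_2]$ that is $K_q$-free. Let $\F$ be the family of all such partitions, and pick one minimizing $e_1=|E(H[W_1,V_1])|+|E(H[W_2,V_2])|$.

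Next I would prove swap lemmas analogous to Claims \ref{c13}, \ref{c15}, \ref{c16}: if some $v\in V_1$ has $t$ neighbors in $W_1$ with $t\in\{2,3,4\}$, then every $v'\in V_2$ has at most $4-t$ neighbors in $W_2$ (swapping $v\leftrightarrow v'$ would otherwise decrease $e_1$), and if $|V_1|$ is at its maximum $p-1$ then no vertex of $V_1$ has more than one neighbor in $W_1$ (by shrinking $V_1$ inside $\F$). Without loss of generality, using the edge-minimality I may normalize to $|V_1|=p-1$, $|V_2|=q-3$. With this setup I would show an analogue of Claim \ref{c17}: $H[V_1\cup W_1]$ is $K_p$-free, because a copy of $K_p$ would have to use all of $V_1$ plus exactly one $w_1\in W_1$ with $V_1\subseteq N(w_1)$, after which the constraint that $w_1$ has at most $\Delta-4$ neighbors in $K$ gives a vertex $v'_1\in V_2$ with $w_1v'_1\notin E(H)$ available for a swap that destroys the clique.

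With $H[V_1\cup W_1]$ ruled out, the remaining task is to deal with a possible copy $K'$ of $K_q$ in $H[V_2\cup W_2]$. Since $|V_2|=q-3$, the clique $K'$ has the form $V'_2\cup\{w'_1,w'_2,w'_3\}$ with $V'_2\subseteq V_2$ of size at least $q-4$ and $w'_1,w'_2,w'_3\in W_2$ forming a triangle with $V'_2\subseteq N(w'_j)$ for each $j$. Proposition \ref{l1} applied to the maximum cliques $\{w'_j\}\cup (V'_2\cup\{y\})$ for appropriate $y$ produces non-neighbors $v_j\in V_1$ of $w'_j$ (for $j=1,2,3$), which I would swap into $V_2$ to simultaneously destroy $K'$ and avoid recreating a $K_p$ on the other side. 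The hypothesis $q\geq 7$ (hence also $p\geq 7$) leaves enough slack in $V_1,V_2$ for these multiple swaps to go through without creating new cliques of the forbidden size.

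The main obstacle I expect is coordinating the swaps when $|V_2|=q-3$: each vertex of $K$ carries four external neighbors (one more than in Fact \ref{f2}), so a single swap can fail to reduce $e_1$ and the $K_q$ structure may involve up to three $W_2$-vertices rather than two. Ruling out every configuration requires iterating the swap argument (as in the final three-paragraph portion of Fact \ref{f2}), and the case $|V(K')\cap V_2|=q-3$ in particular will demand careful use of Proposition \ref{l1} to simultaneously find non-neighbors in $V_1$ for $w'_1,w'_2,w'_3$ while preserving the $K_p$-freeness of $H[V_1\cup W_1]$. This is where the bound $q\geq 7$ is essential; any weakening would leave a case where the swap target set has too few available vertices.
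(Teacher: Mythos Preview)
Your overall strategy matches the paper's, but there is a genuine gap in the step where you ``normalize'' to $|V_1|=p-1$, $|V_2|=q-3$, and in the related claim that when $|V_1|=p-1$ every vertex of $V_1$ has at most one neighbour in $W_1$.

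Since each vertex of $K$ has exactly four neighbours in $H'$, removing a vertex $v$ from $V_1$ (shrinking inside $\F$) changes $e_1$ by $|N(v)\cap W_2|-|N(v)\cap W_1|=4-2|N(v)\cap W_1|$, which is strictly negative only when $|N(v)\cap W_1|\geq 3$. Thus the shrinking argument yields only $|N(v)\cap W_1|\leq 2$, not $\leq 1$. Consequently an optimal partition may well sit at $|V_1|=p-2$, $|V_2|=q-2$ (for instance when every $v'\in V_2$ has at least three neighbours in $W_1$, moving any $v'$ into $V_1$ strictly increases $e_1$), and your proposed analogue of Claim~\ref{c17} --- which relies on $|V_1|=p-1$ together with the $\leq 1$ bound to force a $K_p$ to use all of $V_1$ plus a single $w_1\in W_1$ --- does not apply there. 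The paper handles this explicitly as a separate Case~1 ($|V_1|=p-2$, $|V_2|=q-2$), where a $K_p$ in $H[V_1\cup W_1]$ uses two vertices $w_1,w_2\in W_1$ with $V_1\subseteq N(w_i)$, and Proposition~\ref{l1} is invoked to find two non-neighbours in $V_2$ for a double swap. Only after Case~1 is done does the paper, in Case~2, reduce the subcase $|N(v)\cap W_1|=2$ back to Case~1 and thereby obtain the $\leq 1$ bound you want. A smaller point: when $|V_2|=q-3$, a $K_q$ in $H[V_2\cup W_2]$ can use up to four vertices of $W_2$ (when $|V(K')\cap V_2|=q-4$), not just three; the paper treats the $q-4$ and $q-3$ intersection sizes separately.
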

	
	\begin{proof}[\bf Proof of the Fact \ref{f3}]
		Without loss of generality we may assume that $H$ is a minimal counter-example of Theorem \ref{t1} where $H$ is   $\Delta$-regular, $\chi(H)=\Delta$, and $\omega(H)=\Delta-3$. Assume that $K$ be a $\Delta-3$ clique in $H$. Set $H'=H\setminus K$. Hence,  each vertex of $K$ has  four adjacents in $H'$ and by  $\omega(H)=\Delta-3$,  any vertex of $H'$ has at most $\Delta-4$ neighbors in $K$. Since $H$ is  a minimal counter-example, there exists a $(W_1,W_2)$-decomposition of $V(H')$, such that $H'[W_1]$ is $K_p$-free and $H'[W_2]$ is $K_q$-free.   Set $\F$ as follows.
		\[\F=\{(V',V'')~~|~~V'\cap V''=\emptyset,~V'\cup V''=V(K),~ p-3\leq |V'|\leq p-1, q-3\leq |V''|\leq q-1\}.\]
		
		For each $(V',V'')\in \F$, it can be said that  $H[V']$ is $ K_{p}$-free and $H[V'']$ is $ K_{q}$-free. 		For each $i\in [|\F|]$, define $e=|E(H[W_1,V'])|+|E(H[W_2,V''])|$, where  $(V', V'')$ is a member of $\F$.	Without loss of generality we may suppose that $e_1=|E(H[W_1,V_1])|+|E(H[W_2,V_1'])|\leq |E(H[W_1,V_i])|+|E(H[W_2,V_i'])|$ for each $(V_i, V_i')\in \F$. Now, we have the following claims.
		\begin{claim}\label{c18}  If there exists a vertex $v$ of $V_1(V'_1)$, such that $|N(v)\cap W_1(W_2)|\geq 3$, then for each $v'\in V'_1(V_1)$, we have $|N(v')\cap W_2(W_1)|\leq 1$. If $|N(v)\cap W_1(W_2)|\geq 2$, then for each $v'\in V'_1(V_1)$, we have $|N(v')\cap W_2(W_1)|\leq 2$. Also, if there exists a vertex $v$ of $V_1(V'_1)$, such that $|N(v)\cap W_1(W_2)|=4$, then for each $v'\in V'_1(V_1)$, we have $|N(v')\cap W_2(W_1)|=0$.
		\end{claim} 
		\begin{proof}[Proof of Claim~\ref{c18}]	
			Without loss of generality one can assume that 	$|N(v)\cap W_1|=3$ for some $v\in V_1$. Also, on contrary, suppose that there exists a vertex $v'$ of $V_1'$,  such that $|N(v')\cap W_2|\geq 2$. In this case, one can check that $(V_2, V_2')=(V_1\setminus\{v\}\cup \{v'\}, V'_1\setminus\{v'\}\cup \{v\})\in \F$. Also, as  each vertex of $K$ has  four adjacents in $H'$, then it can be said that $e_2=|E(H[W_1,V_2])|+|E(H[W_2,V_2'])|\leq e_1-2$, which is a contradiction to the minimality of $e_1$. For othar cases the proof is the same and the proof of the claim is complete.
		\end{proof}		
		\begin{claim}\label{c19}  If $|V_1|\geq p-2( ~or~|V_2|\geq q-2)$, then for each vertex $v$ of $V_1(V_2)$, we have $|N(v)\cap W_1(W_2)|\leq 2$.
		\end{claim} 
		\begin{proof}[Proof of Claim~\ref{c19}]	
			Without loss of generality assume that $|V_1|\geq p-2$, $v\in V_1$, and	$|N(v)\cap W_1|\geq 3$.  Now, set $(V_2, V_2')=(V_1\setminus\{v\}, V'_1\cup \{v\})\in \F$. As,  each vertex of $K$ has  four adjacents in $H'$, it is easy to say that $|N(v)\cap W_2|\leq 1$, which means that	$e_2=|E(H[W_1,V_2])|+|E(H[W_2,V_2'])|\leq e_1-2$, which contradicts the minimality of $e_1$ and the proof is complete.
		\end{proof}	
		Now, considr $|V_1|$ and $|V_2|$.  	Without loss of generality   assume that  $|V_1|\geq |V_2|$. Hence, we have two cases  as follows. 	
		
		{\bf Case 1}: $|V_1|=p-2$ and $ |V_2|=q-2$. In this case, by Claim \ref{c19}, for each $v\in V_1(V_2)$ we have $|N(v)\cap W_1(W_2)|\leq 2$. 	Now, by the assumption that $H$ is a minimal counter-example of Theorem \ref{t1}, we can assume that there exists a copy of $K_p$ in $H[V_1\cup W_1]$ or there exists a copy of $K_q$ in $H[V_2\cup W_2]$. 	Without loss of generality we may assume that there exists a copy $K'$ of $K_p$  in $H[V_1\cup W_1]$. Therefore, by using Claims \ref{c18} and \ref{c19} and the fact that $|V_1|=p-2$, there should be two members $w_1$ and $w_2$ of $W_1$, such that $V_1\subseteq N(w_i)$ and  $w_1w_2\in E(H)$.  Therefore, by Proposition \ref{l1},  there exist two members $v'_1,v'_2$ of $V_2$,  such that $w_iv'_i\notin E(H)$.  Now, we set $(V'_1,V'_2)$ as follows.
		\[(V'_1,V'_2)=(  V_1\setminus \{v_1,v_2\})\cup \{v'_1,v'_2\}, V(K)\setminus V'_1).\]
		It is clear that $(V'_1,V'_2)\in \F$ and $|V'_1|=p-2\geq 5$. As $w_iv'_i\notin E(H)$,  by Claim \ref{c19}, it can be said that  $H[V'_1\cup W_1]$ is $K_p$-free. If  $H[V'_2\cup W_2]$ be $K_q$-free, then the proof is complete. So, we may assume that there exists a copy $K''$ of $K_q$   in $H[V'_2\cup W_2]$. As $|V'_2|=q-2$ and $|N(v'_i)\cap W_2|\leq 2$ for each $v'_i\in V'_2$, it is easy to say that there exist two members $w'_1,w'_2$ of $W_2$, such that $w'_1w'_2\in E(H)$ and $V_2'\subseteq N(w'_i)$ for $i=1,2$, that is $V(K'')=V'_2\cup \{w'_1,w'_2\}$. Since   $w'_1w'_2\in E(H)$  and $V_2'\subseteq N(w'_i)$ for $i=1,2$, by Proposition \ref{l1} there exist at least two members $y_1,y_2$ of $V'_1$, such that $w'_iy_i\notin E(H)$. 	Without loss of generality assume that $y_1\neq v'_1\in V'_1$, where $y_1w'_1\notin E(H)$.  Now, we set $(V''_1,V''_2)$ as follows.
		\[(V''_1,V''_2)=( V'_1\setminus \{y_1\})\cup \{v_1\}, V(K)\setminus V''_1).\]
		It is clear that $(V''_1,V''_2)\in \F$. Also, as $|V''_1|=p-2\geq 5$,  $w_1v'_1\notin E(H)$, and  $|N(v')\cap W_1|= 2$ for each $v'_i\in V''_1\setminus \{v'_1\}$ and $V_1''\setminus \{v'_1\}\subseteq N(w_i)$ for $i=1,2$,  it can be said that  $H[V''_1\cup W_1]$ is $K_p$-free. Since $|N(v')\cap W_2|= 2$ for each $v'\in V''_2$,  $w'_1w'_2\in E(H)$, and $V_2''\setminus \{y_1\}\subseteq N(w'_i)$ for $i=1,2$, one can say that $H[V''_2\cup W_2]$ is $K_q$-free and the proof is complete.
		
		{\bf Case 2}: $|V_1|=p-1$ and $ |V_2|=q-3$. In this case, by Claim  \ref{c19}, one can say that  $|N(v)\cap W_1|\leq 2$ for each $v\in V_1$. If there exists a member $v$ of $V_1$,  such that $|N(v)\cap W_1|= 2$, then  $|N(v)\cap W_2|= 2$. Otherwise, we come to a contradiction  by the minimality of $e_1$ by considring $(V'_1,V'_2)=( V_1\setminus \{v\}, V(K)\setminus V'_1)$. So, assume that  $|N(v)\cap W_1|= 2$ and set $(V'_1,V'_2)=( V_1\setminus \{v\}, V(K)\setminus V'_1)$. One can say that $(V'_1,V'_2)\in \F$ and $e=e_1=|E(H[W_1,V_1])|+|E(H[W_2,V_2])|= e'=|E(H[W_1,V'_1])|+|E(H[W_2,V_2'])|$, therefore  the proof is complete by Case 1. Hence, we may suppose that $|N(v)\cap W_1|\leq 1$ for each $v\in V_1$. Now,  by assumption that $H$ is a minimal counter-example of Theorem \ref{t1}, we can assume that there exists a copy of $K_p$ in $H[V_1\cup W_1]$ or there exists a copy of $K_q$ in $H[V_2\cup W_2]$.  First, we may assume that there exists a copy $K'$ of $K_p$  in $H[V_1\cup W_1]$. Therefore, as $|V_1|=p-1$ and $|N(v)\cap W_1|\leq 1$ for each $v\in V_1$, there exiasts a member $w_1$ of $W_1$,  such that $V_1\subseteq N(w_1)$.  Therefore, by Proposation \ref{l1} there exists a member $v'_1$ of $V_2$, such that $w_1v'_1\notin E(H)$.  Now, we set $(V'_1,V'_2)$ as follows.
		\[(V'_1,V'_2)=(  V_1\setminus \{v_1\})\cup \{v'_1\}, V(K)\setminus V'_1).\]
		
		It is clear that $(V'_1,V'_2)\in \F$ and $|V'_1|=p-1\geq 6$. As  $w_1v'_1\notin E(H)$ and   $|N(v)\cap W_1|\leq 1$ for each $v\in V_1$, it can be said that  $H[V'_1\cup W_1]$ is $K_p$-free. If  $H[V'_2\cup W_2]$ is $K_q$-free, then the proof is complete. So, we may assume that there exists a copy $K''$ of $K_q$   in $H[V'_2\cup W_2]$. As $|N(v_i)\cap W_2|\leq 3$ and $|V'_2|=q-3$, it is easy to say that there exist three members $w'_1,w'_2,w'_3$ of $W_2$,  such that $w'_iw'_j\in E(H)$ and $V_2'\subseteq N(w'_i)$ for $i=1,2,3$, that is $V(K'')=V'_2\cup \{w'_1,w'_2,w'_3\}$. Now, one can say that there exists at least one member $y_1\neq v'_1 $ of $V'_1$,  such that $w'_iy_1\notin E(H)$ for one $i\in\{1,2,3\}$.  Assume that $y_1w'_1\notin E(H)$.  Now, we set $(V''_1,V''_2)$ as follows.
		\[(V''_1,V''_2)=( V'_1\setminus \{y_1\})\cup \{v_1\}, V(K)\setminus V''_1).\]
		It is clear that $(V''_1,V''_2)\in \F$, $|V''_1|=p-1\geq 6$, and $w_1v'_1\notin E(H)$. So,  it can be said that  $H[V''_1\cup W_1]$ is $K_p$-free. Also, as $|N(v')\cap W_2|= 3$ for each $v'\in V'_2$,  $w'_1w'_2\in E(H)$, and $V_2''\setminus \{y_1\}\subseteq N(w'_i)$ for $i=1,2,3$ and  $y_1w'_1\notin E(H)$, one can say that $H[V''_2\cup W_2]$ is $K_q$-free and the proof is complete.
		
		Now, we may assume that $H[V_1\cup W_1]$ is $K_p$-free, and there exists a copy of $K_q$ in $H[V_2\cup W_2]$. 	Without loss of generality we may assume that $K'=K_q\subseteq H[V_2\cup W_2]$. By considering the vertices of $V(K')$ and as each vertex of $K$ has  four adjacents in $H'$, one can assume that $q-4\leq |V(K')\cap V_2|\leq q-3$. Let $|V(K')\cap V_2|= q-4$, that is there exist four vertices $w'_i$, $i=1,2,3,4$ of $W_2$,  such that $w'_iw'_j\in E(H)$ and $|V_2\cap N(w'_1)\cap  N(w'_2)\cap  N(w'_3)|\geq q-4$. By Claim \ref{c18}, for each $v\in V_1$, we have $|N(v)\cap W_1|= 0$. Assume that $\{v'_1,v_2',v'_3\}\subseteq V_2\cap N(w'_1)\cap  N(w'_2)\cap  N(w'_3)$. Also, by Proposition \ref{l1} there exist three members $\{v_1,v_2,v_3\}$ of $V(K)$,  such that $v_iw'_i\notin E(H)$. Now, we set $(V'_1,V'_2)$ as follows.
		\[(V'_1,V'_2)=( V(K)\setminus V'_2 , V_2\setminus\{v'_1,v_2',v'_3\} \cup \{v_1,v_2,v_3\} ).\]
		It is clear that $(V''_1,V''_2)\in \F$,  $|V'_1|=p-1\geq 6$, and $|N(v)\cap W_1|=0$ for at least five vertices of $V'_1$. Hence,  it can be said that  $H[V'_1\cup W_1]$ is $K_p$-free. Also, as $|N(v')\cap W_2|= 4$,  $w'_iv'\in E(H)$ for each $v'\in V'_2\setminus \{v_1,v_2,v_3\}$, and $v_iw'_i\notin E(H)$, one can say that $H[V'_2\cup W_2]$ is $K_q$-free and the proof is complete. So, we may suppose that  $|V(K')\cap V_2= V_2|= q-3$, that is there exist three vertices $w'_i$, $i=1,2,3$ of $W_2$,  such that $w'_iw'_j\in E(H)$ and $V_2\subseteq  N(w'_i)$. By Claim \ref{c18}, for each $v\in V_1$, we have $|N(v)\cap W_1|\leq 1$  and by Proposition \ref{l1}, there exist three members $\{v_1,v_2,v_3\}$ of $V(K)$,  such that $v_iw'_i\notin E(H)$.  Now, we set $(V'_1,V'_2)$ as follows.
		\[(V'_1,V'_2)=( V(K)\setminus V'_2, V_2\setminus\{v'_1,v'_2,v'_3\} \cup \{v_1,v_2,v_3\} ).\]
		
		It is clear that $(V''_1,V''_2)\in \F$. As $q\geq 7$, $|N(v')\cap W_2|\leq 4$,  $w'_iv'\in E(H)$ for each $v'\in V'_2\setminus \{v_1,v_2,v_3\}$, and $v_iw'_i\notin E(H)$, one can say that $H[V'_2\cup W_2]$ is $K_q$-free. Now, assume that $H[V'_1\cup W_1]$ has a copy of $K_p$. Therefore, as $p\geq 7$ and for each $v\in V_1$ we have $|N(v)\cap W_1|\leq 1$, one can assume that there exists a vertex $w_1$ of $W_1$,  such that $V'_1\subseteq N(w_1)$. Hence, we should have $|N(v)\cap W_1|= 1$ for each $v\in V'_1$ and $|N(v')\cap W_2|\leq 3$ for each $v'\in V'_2$. As $H[V_1\cup W_1]$ is $K_p$-free and  $V'_1\subseteq N(w_1)$, one can assume that $w_1v_i\notin E(H)$ for at least one $i\in\{1,2,3\}$. 	Without loss of generality we may assume that $w_1v_1\notin E(H)$. Now, we set $(V''_1,V''_2)$ as follows.
		\[(V''_1,V''_2)=( V_1\setminus\{v_2,v_3\}\cup\{v'_1,v'_2\},  V(K) \setminus V''_1).\] 
		It is clear that $(V''_1,V''_2)\in \F$ and $|V'_1|=p-1\geq 6$. As $v_1w_1\notin E(H)$, $V''_1\setminus \{v_1\}\subseteq N(w_1)$, and $|N(v)\cap W_1|= 1$ for each $v\in V''_1$, and  $|V''_1|=p-1\geq 6$,  it can be said that  $H[V''_1\cup W_1]$ is $K_p$-free. Also, as $|N(v')\cap W_2|\leq 3$ for each $v'\in V''_2$, and  $w'_iv'_i\notin E(H)$ for $i=2,3$, one can say that $H[V'_2\cup W_2]$ is $K_q$-free. Therfore  the proof is complete.	
	\end{proof}
	By Fact \ref{f1}, Fact \ref{fa1} , Fact \ref{f2}, and Fact \ref{f3},  we prove that Theorem \ref{t1} is true for some cases.	To prove  another cases we need  the following results of Landon Rabern.
	\begin{theorem}\label{th5}\cite{rabern2011hitting}
		If $H$ is a  graph with $\omega(H)\geq \frac{3(\Delta+1)}{4}$, then $H$ has an independent set $I$ such that $\omega(H\setminus I)\leq \omega(H)-1 $.
	\end{theorem}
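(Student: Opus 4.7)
The plan is to analyze the structure of the family of maximum cliques of $H$ and then extract an independent set that hits every maximum clique. Throughout, let $\omega=\omega(H)$ and $\Delta=\Delta(H)$, and let $\mathcal{M}$ denote the family of all $\omega$-cliques in $H$.

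First I would establish a pairwise intersection bound. Fix two distinct maximum cliques $K,K'\in \mathcal{M}$ and a vertex $v\in K\setminus K'$. Since $v$ already has $\omega-1$ neighbors inside $K$, it has at most $\Delta-(\omega-1)$ neighbors outside $K$, and in particular at most $\Delta-\omega+1$ neighbors in $K'$. Hence $v$ has at least $|K'|-(\Delta-\omega+1)=2\omega-\Delta-1$ non-neighbors in $K'$. The hypothesis $\omega\geq \tfrac{3(\Delta+1)}{4}$ rearranges to $2\omega-\Delta-1\geq \tfrac{\Delta+1}{2}$, so every outsider of $K'$ is non-adjacent to at least $(\Delta+1)/2$ members of $K'$. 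A short counting argument combining this with the fact that each $u\in K'$ has only $\Delta-(\omega-1)$ external neighbors shows that any two maximum cliques either coincide or overlap in at least $2\omega-\Delta-1$ vertices. Consequently the relation ``shares a vertex with'' partitions $\mathcal{M}$ into groups whose members pairwise share a substantial common core.

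Next I would construct the desired independent set by a transversal argument. Group the maximum cliques into components, so that members of the same component share vertices, and note that within a component the common core of the participating cliques has size at least $2\omega-\Delta-1$ by the inequality above applied to each pair. Form an auxiliary partition of a subset of $V(H)$ whose blocks are the unions of the vertex sets of the cliques in each component. The lower bound on pairwise intersections, together with the degree bound $\Delta(H)\leq \Delta$, translates into a lower bound on the size of each block relative to the maximum degree of the graph restricted to the union of the blocks. Applying Haxell's independent transversal theorem (or a direct greedy argument using the core of each component) then produces an independent set $I$ that picks one vertex from each block; by construction $I$ meets every maximum clique of $H$. Since $I$ is independent and intersects every $\omega$-clique, every maximum clique loses at least one vertex in $H\setminus I$, so $\omega(H\setminus I)\leq \omega-1$, as required.

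The main obstacle is in the transversal step: guaranteeing an independent choice of representatives. The intersection bound from the first step is what makes the Haxell-type hypothesis on block size versus cross-block degree hold, but one must be careful in boundary regimes where $\omega$ is only slightly above $\tfrac{3(\Delta+1)}{4}$, since then the common core of a component can be small and the interaction between different components must be controlled by a delicate accounting of vertices that are shared among several maximum cliques versus those that are private to a single clique.
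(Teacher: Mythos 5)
This statement is imported by the paper from Rabern's work \cite{rabern2011hitting} and is not proved there, so the only meaningful comparison is with Rabern's published argument, whose overall architecture (components of pairwise-intersecting maximum cliques, then Haxell's independent transversal theorem) your proposal correctly identifies. The problem is that the write-up has genuine gaps at precisely the points where the hypothesis $\omega\geq\frac{3(\Delta+1)}{4}$ does its work. First, the intersection bound is both misderived and misstated: your count of non-neighbours of $v\in K\setminus K'$ inside $K'$ forgets that $v$ is adjacent to all of $K\cap K'$, and the conclusion should read that two maximum cliques are either \emph{disjoint} or meet in at least $2\omega-\Delta-1$ vertices (the clean derivation: a common vertex is adjacent to everything in $K\cup K'$ except itself, so $|K\cup K'|\leq\Delta+1$ and $|K\cap K'|=2\omega-|K\cup K'|\geq 2\omega-\Delta-1$). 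As written, your conclusion would force all maximum cliques to pairwise intersect, which is false.

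Second, and more seriously, you pass from \emph{pairwise} intersections to a large \emph{common core} of each component ``by the inequality above applied to each pair''; this does not follow. Several subsets of a clique of size $\omega$, each pair meeting in at least $(\Delta+1)/2$ vertices, can have a common intersection far smaller than $2\omega-\Delta-1$. What is actually needed is Hajnal's lemma, $|\bigcap\mathcal{K}|+|\bigcup\mathcal{K}|\geq 2\omega$, combined with an induction along the component showing simultaneously that the common intersection is nonempty and that the union therefore lies in a closed neighbourhood and has size at most $\Delta+1$. Third, your Haxell blocks are the \emph{unions} of the cliques in each component $D$; a transversal of the unions need not hit every clique of the component (a vertex chosen in $K_1\setminus K_2$ misses $K_2$), so the blocks must be the cores $\bigcap D$. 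With that correction the decisive computation --- each core has size at least $2\omega-\Delta-1$ while each of its vertices has at most $\Delta-\omega+1$ neighbours in other blocks, so Haxell's hypothesis $2\omega-\Delta-1\geq 2(\Delta-\omega+1)$ is exactly equivalent to $\omega\geq\frac{3(\Delta+1)}{4}$ --- is the crux of the whole proof, and it is nowhere carried out in your sketch; you only acknowledge that the boundary regime is ``delicate.'' Until these three points are filled in, the argument is an outline of Rabern's proof rather than a proof.
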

	\begin{theorem}\label{th6}\cite{king}
	If $H$ is a connected graph with $\omega(H)> \frac{2(\Delta+1)}{3}$, then $H$ has an independent set $I$ such that $\omega(H\setminus I)\leq \omega(H)-1 $, unless it is the strong product of an odd cycle with length at	least $5$ and the complete graph $K_{\frac{\omega(H)}{2}}$.
\end{theorem}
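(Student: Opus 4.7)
The plan is to produce an independent set $I \subseteq V(H)$ that meets every maximum clique of $H$; then $\omega(H \setminus I) \leq \omega(H) - 1$, which is the desired conclusion. I would begin by restricting attention to the vertex set $V^{\star} = \bigcup_{K} V(K)$, where $K$ ranges over maximum cliques of $H$. The hypothesis $\omega > \tfrac{2(\Delta+1)}{3}$ yields the inclusion-exclusion bound
\[
|K \cap K'| \;\geq\; 2\omega - (\Delta+1) \;>\; \tfrac{\omega}{2}
\]
for any two maximum cliques $K, K'$, so every pair of maximum cliques already shares a strict majority of their vertices. This heavy-overlap phenomenon is the engine driving the whole proof.

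Next I would exploit this overlap to organize $V^{\star}$ into \emph{blocks}: declare $u \sim v$ if $u$ and $v$ lie in exactly the same collection of maximum cliques. Using the overlap inequality above together with a short argument on triples of maximum cliques, I would show that every maximum clique is partitioned into exactly two blocks, each of size $\omega/2$, and that each block induces a $K_{\omega/2}$ in $H$ whose external neighborhood in $V \setminus V^{\star}$ is tightly controlled by $\Delta$. Hitting every maximum clique with an independent set then becomes the task of choosing, for each maximum clique, one of its two constituent blocks, in such a way that the chosen blocks support an independent transversal in $H$. This data is encoded by an auxiliary graph $\mathcal{B}$ whose vertices are the blocks and whose edges are the pairs of blocks forming a maximum clique.

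The final step is to analyze $\mathcal{B}$. If $\mathcal{B}$ is bipartite, one colour class of a proper $2$-colouring picks out a consistent family of blocks; since each block has size $\omega/2$ and the degree bounds in $H$ are controlled by the overlap condition, Haxell's independent transversal theorem then produces an independent set hitting every maximum clique. If on the other hand a component of $\mathcal{B}$ contains an odd cycle, connectivity of $H$ combined with the rigidity forced by the overlap inequality pushes that component to be exactly an odd cycle of length $2k+1 \geq 5$, and reconstructing $H$ from the blocks recovers precisely the strong product $C_{2k+1} \boxtimes K_{\omega/2}$. I expect the main obstacle to be this structural recovery: proving that the only way $\mathcal{B}$ can fail to be bipartite is via a bare odd cycle of length at least $5$, and then showing that this combinatorial shape forces the full strong-product structure on $H$ rather than merely on the induced subgraph $H[V^{\star}]$. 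Once the dichotomy is established, Haxell's theorem in the non-exceptional case finishes the argument essentially mechanically.
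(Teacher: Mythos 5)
The paper does not prove this statement at all: it is quoted verbatim from the cited reference \cite{king} (King's theorem on hitting all maximum cliques with a stable set), so your attempt can only be measured against the argument in that source. Your first and last moves are the right ones --- reduce to finding an independent set meeting every maximum clique, use the Hajnal-type bound $|K\cap K'|\ge 2\omega-(\Delta+1)$, and close with a Haxell-type independent transversal theorem (King indeed uses a ``lopsided'' strengthening of Haxell's theorem). But the middle of your argument contains two genuine gaps. First, the overlap inequality is claimed ``for any two maximum cliques,'' whereas it only holds for \emph{intersecting} ones: the bound $|K\cup K'|\le\Delta+1$ comes from a vertex of $K\cap K'$ being adjacent to everything else in the union, and two maximum cliques may well be disjoint. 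This is precisely why the correct proof works component by component in the intersection graph of maximum cliques. Second, and more seriously, your central structural claim --- that every maximum clique splits into exactly two equivalence-class blocks of size $\omega/2$ --- is false. Take $H=K_\omega$ (one maximum clique, one block), or two maximum cliques meeting in $\omega-1$ vertices (blocks of sizes $\omega-1$, $1$, $1$). The heavy pairwise overlap does not force the $C_{2k+1}\boxtimes K_{\omega/2}$-like block structure; that structure is special to the extremal configuration, which moreover satisfies $\omega=\tfrac{2}{3}(\Delta+1)$ exactly and so cannot even occur under the strict inequality in the hypothesis. Consequently the whole auxiliary block graph $\mathcal{B}$ and its bipartite/odd-cycle dichotomy is not a viable skeleton for the proof.

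What King actually does: let $X_1,\dots,X_r$ be the sets $\bigcap_{K\in C}K$ as $C$ ranges over the components of the intersection graph of maximum cliques. Hajnal's clique-collection theorem gives $|\bigcap_{K\in C}K|+|\bigcup_{K\in C}K|\ge 2\omega$, and since the union lies in the closed neighbourhood of any vertex of the (nonempty) intersection, each $|X_i|\ge 2\omega-(\Delta+1)$. The $X_i$ are pairwise disjoint cliques, and any independent set meeting each $X_i$ meets every maximum clique. Plain Haxell would need $|X_i|$ to be at least twice the outside degree, which only works for $\omega\ge\tfrac34(\Delta+1)$ (Rabern's bound, Theorem~5 of this paper); the improvement to $\tfrac23(\Delta+1)$ comes from a lopsided transversal lemma exploiting that each part $X_i$ is itself a clique. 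If you want to salvage your write-up, replace the block decomposition by the component cores $X_i$ and state the lopsided transversal lemma you need; the odd-cycle exceptional analysis should be dropped (or the hypothesis weakened to $\omega\ge\tfrac{2}{3}(\Delta+1)$, which is where the strong products actually live).
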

	In the following, we prove Theorem \ref{t1}.
	
	\begin{proof} [ \bf Proof of  Theorem \ref{t1}]  	Without loss of generality we may assume that $p\geq q$. If $\chi(H)\leq \Delta-1$ then the proof is trivial. So, assume that $\chi(H)= \Delta$. Therefore, by Theorem \ref{th2} we should have $\Delta-3\leq \omega(H)\leq \Delta-1$. When $\omega(H) = \Delta-1$, by Fact \ref{f1}, the proof is complete for each $p,q\geq 5$. When $\omega(H) = \Delta-2$, by Fact \ref{f2}, the proof is complete for each $p,q\geq 6$,  and for $q= 2$ the proof is trivial. Also, when $\omega(H) = \Delta-3$, by Fact \ref{f3}, the proof is complete for each $p,q\geq 7$,  and for $q\leq 3$ the proof is trivial. Therefore,  we need to show that the theorem holds  for the following  cases:
		\begin{itemize}
			\item   $\omega(H) = \Delta-1$ and $q\in \{2,3,4\}$,
			\item   $\omega(H) = \Delta-2$ and $q\in \{3,4,5\}$,
			\item   $\omega(H) = \Delta-3$ and $q\in \{4,5,6\}$.
		\end{itemize}
	
{\bf Case 1:} Assume that  $\omega(H) = \Delta-1$ and $q\in \{2,3,4\}$. In this case we have $(p,q)\in \{(\Delta-1,2), (\Delta-2,3), (\Delta-3,4)\}$.  First assume that $(p,q)=(\Delta-1,2)$. By Theorem \ref{th5}, one can say that  $H$ has a    independent set $I_1$, such that $\omega(H\setminus I_1)\leq  \Delta-2$. By setting $(V_1, V_2)=(V(H)\setminus I_1, I_1)$, the proof is complete. Now assume that $(p,q)=(\Delta-2,3)$.  Let $I_1$ be  a maximum independent set of $H$. Therefore, we have  $\Delta(H\setminus I_1) \leq \Delta-1$. If  $\omega(H\setminus I_1)\leq  \Delta-3$, the proof is complete. So, assume that  $\omega(H\setminus I_1)=\Delta-2$. Therefore as $\Delta\geq 13$, by Theorem \ref{th5},  $H\setminus I_1$ has a   independent set $I_2$, such that $\omega(H\setminus I_1)\leq  \Delta-3$. By setting $(V_1, V_2)=(V(H)\setminus (I_1\cup I_2), I_1\cup I_2)$, the proof is complete. For the case that $(p,q)=(\Delta-3,4)$ the proof is the same.

{\bf Case 2:} Assume that  $\omega(H) = \Delta-2$ and $q\in \{3,4,5\}$. In this case we have $(p,q)\in \{(\Delta-2,3), (\Delta-3,4), (\Delta-4,5)\}$.  For  $(p,q)=(\Delta-2,3)$ the proof is complete by Case 1. Now assume that $(p,q)=(\Delta-3,4)$, for other case the proof is same.  Let $I_1$ be  a maximum independent set of $H$ and $I_2$ be  a maximum independent set of $H\setminus I_1$ . Therefore, we have  $\Delta(H\setminus (I_1\cup I_2)) \leq \Delta-2$. If  $\omega(H\setminus (I_1\cup I_2))\leq  \Delta-4$, the proof is complete. So, assume that  $\omega(H\setminus (I_1\cup I_2))=\Delta-3$. Therefore as $\Delta\geq 13$ and $\Delta(H\setminus (I_1\cup I_2))\leq \Delta-2$, by Theorem \ref{th5} and Theorem \ref{th6}, one can say that  $H\setminus (I_1\cup I_2)$ has a    independent set $I_3$, such that $\omega(H\setminus (I_1\cup I_2))\leq  \Delta-4$. By setting $(V_1, V_2)=(V(H)\setminus (I_1\cup I_2\cup I_3), I_1\cup I_2\cup I_3)$, the proof is complete.
	
{\bf Case 3:} Assume that $\omega(H) = \Delta-3$ and $q\in \{4,5,6\}$. Therefore, consider the case that  $\omega(H) = \Delta-3$ and $(p,q)\in \{(\Delta-3,4),(\Delta-4,5), (\Delta-5,6)\}$. First assume that $(p,q)=(\Delta-3,4)$.  Let $I_1$ be  a maximum independent set of $H$. Therefore, we have  $\Delta(H\setminus I_1) \leq \Delta-1$. If  $\omega(H\setminus I_1)\leq  \Delta-4$, the proof is complete. So, assume that  $\omega(H\setminus I_1)=\Delta-3$. Hence as $\Delta \geq 13$, by Theorem \ref{th5} one can say that  $H\setminus I_1$ has a   independent set $I_2$, such that $\omega(H\setminus I_1)\leq  \Delta-4$. By setting $(V_1, V_2)=(V(H)\setminus (I_1\cup I_2), I_1\cup I_2)$, the proof is complete. Now, assume that $(p,q)=(\Delta-4,5)$. For $(p,q)=(\Delta-5,6)$ the proof is the same. Let $I_1$ is  a maximum independent set of $H$. Therefore,   $\Delta(H\setminus I_1) \leq \Delta-1$. Assume that  $\omega(H\setminus I_1)\leq  \Delta-4$, and set  $I_2$ is a maximum independent set  of $H\setminus I_1$. Therefore, $\Delta(H\setminus (I_1\cup I_2)) \leq  \Delta-2$. If  $\omega(H\setminus (I_1\cup I_2))\leq \Delta-5$,  the proof is complete. So, assume that $\omega(H\setminus (I_1\cup I_2))=\Delta-4$. Now,  as $\Delta(H\setminus (I_1\cup I_2))\leq \Delta-2$ and $\Delta\geq 13$, so by Theorem \ref{th6}, one can say that  $H\setminus( I_1\cup I_2)$ has a  independent set $I_3$, such that $\omega(H\setminus (I_1\cup I_2\cup I_3))\leq \Delta-5$. By setting $(V_1, V_2)=(V(H)\setminus (I_1\cup I_2\cup I_3), I_1\cup I_2\cup I_3)$ the proof is complete. Therfore, we may assume that  $\omega(H\setminus I_1)= \Delta-3$.  So by Theorem \ref{th5},  $H\setminus I_1$ has a  independent set $I_2$, such that $\omega(H\setminus (I_1\cup I_2))\leq \Delta-4$. Therefore, $\Delta(H\setminus (I_1\cup I_2)) \leq \Delta-2$. If  $\omega(H\setminus (I_1\cup I_2))\leq \Delta-5$,  the proof is complete. So, assume that $\omega(H\setminus (I_1\cup I_2))=\Delta-4$. So  as $\Delta(H\setminus (I_1\cup I_2))\leq \Delta-2$ and $\Delta\geq 13$,  by Theorem \ref{th6}  one can checked that   $H\setminus (I_1\cup I_2)$ has a  independent set $I_3$, such that $\omega(H\setminus (I_1\cup I_2\cup I_3))\leq \Delta-5$. By setting $(V_1, V_2)=((V(H)\setminus (I_1\cup I_2\cup I_3)), I_1\cup I_2\cup I_3)$ the proof is complete. 

Hence, by Case 1, Case 2, and Case 3 the theorem holds.
	\end{proof}	
	
	\section{Proof of  Theorem \ref{thm1} and Theorem \ref{t2}}	
	In this section, we prove the main results.
	\begin{proof} [ \bf Proof of  Theorem \ref{thm1}]
		First note that  we only need to prove the statement for $k=2$. By Theorem \ref{t1} one can said that  the statement holds  for $k=2$. To prove Theorem~\ref{thm1},
		set $p=\sum_{i=1}^{k-1}p_i-(k-2)$ and $q=p_k$. Hence $p+q=\Delta(H)+1$. Since the statement is true for $k=2$, we have 
		a partition of $V(H)$ into  $(V_1, V_2)$ such that $H[V_1]$ is $K_p$-free and  $H[ V_2] $ is $K_q$-free. Also, assume that 
		$V_2$ is a maximall $K_q$-free subset of $H$, such that for which we have $H[V_1]$ is $K_p$-free. Therefore, each vertex in $V_1$ has at least $q-1$ neighbours  in $V_2$.
		Thus, the maximum degree of $H[V_1]$ is at most $\Delta(H)-(q-1)=p$.  
		
		If the maximum degree of $H[V_1]$ is less than $p$, let $v\in V_1$ be a vertex with maximum degree in $H[V_1]$ such that its degree is equal to $m<p$. We add $\{x_1,\ldots, x_{p-m}\}$ new vertices to $H[V_1]$ and join all of them to $v$, forming a new graph $G$. The graph $G$ has maximum degree $p$.  Also it can be say that $G$ is $K_p$-free graph. Therefore by  induction $G$ is a $(V_1,\ldots, V_{k-1})$-partitionable graph. 
	Hence, there exists a partition of $V(G)$   into  $V'_1,\cdots, V'_{k-1}$ such that for each $1\leq i\leq k-1$, $G[V'_i]$ is $K_{p_i}$-free. 
		Therefore it can be say that $V'_1\cap V_{1},\ldots, V'_{k-1}\cap V_1, V_2$ is the desired partition of $V(H)$, that is $H$ is a $(V_1,\ldots, V_{k})$-partitionable graph. Therefore, we may assume that $H[V_{1}]$  is a graph with maximum degree $p$.  	We also have $\omega( H[V_{1}])\leq p-1$, $p_1+p_2\geq 14$, and $\sum_{i=1}^{k-1}p_i=\Delta(H[V_1])-1+(k-1)$. We iterate this procedure until we  obtain the desired partition.

		 As $k\geq 3$ and $p_1+p_2\geq 14$, we can assume that
		$H[V_1]$ has maximum degree $p\geq 13$. Now, we have $\sum_{i=1}^{k-1}p_i=p-1+(k-1)$. We iterate this procedure until to  obtain the required partition. Hence, the proof is complete. 
	\end{proof}		
	
	\begin{proof} [ \bf Proof of  Theorem \ref{t2}] 	First note that  we only need to prove the statement for $k=2$. Assume that the statement  holds for $k=2$, i.e. if $\Delta(H)-3\leq \omega(H)\leq \Delta(H)-1$, $p,q\geq 2$,  and $p+q=\Delta(H)+1\geq 14$, then there exists a partition of $V(H)$ into  $(V_1, V_2)$ such that $H[V_1]$ is a maximum $K_p$-free subgraph and  $H[ V_2] $ is $K_q$-free. To prove Theorem~\ref{t2}, set $q=\sum_{i=2}^{k}p_i-(k-2)$ and $p=p_1$. Since the statement is true for $k=2$, we have a partition of $V(H)$ into  $(V_1, V_2)$ such that $H[V_1]$ is a maximum $K_p$-free subgraph and  $H[ V_2] $ is $K_q$-free. As $H[V_1]$ is a maximum $K_p$-free subgraph, each vertex in $V_2$ has at least $p-1$ neighbours  in $V_1$. Thus, the maximum degree of $H[V_2]$ is at most $\Delta(H)-(p-1)=q$. We can assume that $H[V_2]$ has maximum degree $q\geq 13$. Now, we have $\sum_{i=2}^{k}p_i=p-1+(k-1)$. We iterate this procedure until to  obtain the required partition. 	
		
		Now, we prove the statement   for $k=2$. 	Without loss of generality we may assume that $p\geq q$. On contrary, we may suppose that $H$ is a minimal counter-example of Theorem \ref{t2} with maximum degree $\Delta$ and  $\omega(H)=t$, where $t\in \{\Delta-3, \Delta-2, \Delta-1\}$. Let $K$ is a $t$ clique in $H$. Set $H'=H\setminus K$.  Since each vertex of $K$ has at most theree adjacents in $H'$ and   for each $v\in M$ and as  any vertex of $H'$ has at most $t-1$ neighbors in $K$, then one can assume that $\Delta(H)-t+1\leq \Delta(H')\leq \Delta(H)$. 	If the maximum degree of $H'$ is less than $\Delta(H)$, let $v\in V(H')$ be a vertex with maximum degree in $H'$ such that its degree is equal to $m<\Delta(H)$. We add $X'=\{x_1,\ldots, x_{m}\}$ new vertices to $H'$ and join all of them to $v$, forming a new graph $G$, where $m\leq t-1$ and $m+\Delta(H')=\Delta(H)$. The graph $G$ has maximum degree $\Delta(H)$.  Also it can be say that $|V(G)|\leq |V(H)|-1$ and $\omega(G)\leq \Delta(G)-1$.
		
		 Since $H$ is  a minimal counter-example, there exists a $(W_1,W_2)$-decomposition of $V(G)$, such that $G[W_1]$ is a maximum $K_p$-free subgraph and $G[W_2]$ is $K_q$-free. Assume that $(Y_1, Y_2)=(W_1\cap V(H'),W_2\cap V(H')$. As $G[X']$ is independent, it easy to say that  $(Y_1, Y_2)$  is a decomposition of $V(H')$, such that in $H'$, $H'[Y_1]$ is a maximum $K_p$-free subgraph   and $H'[Y_2]$ is $K_q$-free.
		 
		    Now, consider $(S_1,S_2)$-decomposition of $V(H)$, such that $S_1= Y_1\cup V_1$  and $S_2= Y_2\cup V_2$ where $|V_1|=p-1, |V_2|=t-p+1$, and $V_1\cup V_2=V(K)$. By Theorem \ref{t1}, one can assume that $H[S_1]$ is  $K_p$-free  and $H[S_2]$ is $K_q$-free. We claim that $H[S_1]$ is a maximum $K_p$-free subgraph of $H$. Otherwise, assume that there exists a subset $S$  of $V(H)$ with $|S|\geq |S_1|+1$, such that $H[S]$ is a  $K_p$-free subgraph and $H[V(H)\setminus S]$ is $K_q$-free. As $|K|=t$ and $H[S]$ is a  $K_p$-free, it can be said that $|S\cap V(K)|\leq p-1$, and $t-p+1\leq |(V(H)\setminus S)\cap V(K)|\leq q-1$. Therefore, one can say that  there exists a $(Y'_1,Y'_2)$-decomposition of $V(H')$, such that $H'[Y'_1]$ is  $K_p$-free,  $H'[Y'_2]$ is $K_q$-free, and $|Y'_1|\geq |Y_1|+1$, consequently  there exists a $(W'_1,W'_2)$-decomposition of $V(G)$, such that $G[W'_1]$ is  $K_p$-free,  $G[W'_2]$ is $K_q$-free, and $|W'_1|\geq |W_1|+1$, were $(W'_1,W'_2)=(Y'_1\cup X',Y'_2)$,    which contradicts the fact that  $G[W_1]$ is a maximum $K_p$-free subgraph of $G$.  Therefore, $H[S_1]$ is a maximum $K_p$-free subgraph of $H$ and that the proof is complete.
	\end{proof}		
	\subsection{ Some research problems related to the contents of this paper.}

	In this section, we propose some research problems related to the contents of this paper. The first problem concerns  Theorem \ref{t1}, Theorem \ref{t2} and Theorem \ref{2th}, as we address below.
	\begin{problem}
		Let $p$ and $q$ are two positive integers,   where   $ p, q\geq 2$ and $p+q=\Delta+1$. Suppose that $H$ is a graph with  $\Delta(H)=\Delta\leq 12$ and $\omega(H)\leq \Delta-1$. If $\Delta+1= p+q$, then there exists a $(V_1,V_2)$-decomposition of $V(H)$, so that $H[V_1]$ is a   $K_p$-free subgraph and $H[V_2]$ is a $K_q$-free subgraph of $H$. 
		
	\end{problem}
	\begin{problem}
		Let $p$ and $q$ are two positive integers,   where   $ p, q\geq 3$ and $p+q=\Delta+1$. Suppose that $H$ is a graph with  $\Delta(H)=\Delta\geq 5$ and $\omega(H)\leq \Delta-1$. There exists a $(V_1,V_2)$-decomposition of $V(H)$, so that $H[V_1]$ is a maximum $K_p$-free subgraph and $H[V_2]$ is a $K_q$-free subgraph of $H$. 
		
	\end{problem}
	
	\begin{problem}
		Let $p$ and $q$ are two positive integers,   where   $ p, q\geq 3$ and $p+q=\Delta+1$. Suppose that $H$ is a graph with  $\Delta(H)=\Delta\geq 5$ and $\omega(H)\leq \Delta-1$. There exists a $(V_1,V_2)$-decomposition of $V(H)$, so that $H[V_1]$ is  $(p-1)$-regular-free subgraph and $H[V_2]$ is $(q-1)$-regular-free subgraph of $H$. 
	\end{problem}
		\section{Declarations}
	{\bf Conflict of Interest:} On behalf of all authors, the corresponding author states	that there is no conflict of interest.
	
	{\bf Data Availability Statement:}No data were generated or used in the preparation of this paper.

	\bibliographystyle{plain}
	\bibliography{G-free} 
\end{document}